\newtheorem{theorem}{Theorem}
\newtheorem{corollary}[theorem]{Corollary}
\newtheorem{lemma}[theorem]{Lemma}
\newtheorem{proposition}[theorem]{Proposition}
\theoremstyle{definition}
\newtheorem{definition}[theorem]{Definition}
\newtheorem{remark}[theorem]{Remark}
\newcommand{\bC}{\mathbb{C}}
\newcommand{\bE}{\mathbb{E}}
\newcommand{\bR}{\mathbb{R}}
\newcommand{\cF}{\mathcal{F}}
\newcommand{\cN}{\mathcal{N}}
\begin{document}

\keywords{Prime factors, additive functions, limiting distributions}
\subjclass[2020]{11N37, 11N60}

\title[Distribution of prime factors with a given multiplicity]{Distribution of the number of prime factors with a given multiplicity}
\author{Ertan Elma}
\address{Department of Mathematics \& Computer Science, University of Lethbridge, Faculty of Arts and Science, 4401 University Drive, Lethbridge, Alberta T1K 3M4, Canada}
\email{ertan.elma@uleth.ca}
\author{Greg Martin}
\address{Department of Mathematics, University of British Columbia, Vancouver, British Columbia V6T 1Z2, Canada}
\email{gerg@math.ubc.ca}
\maketitle

\begin{abstract}
Given an integer $k\ge2$, let $\omega_k(n)$ denote the number of primes that divide~$n$ with multiplicity exactly~$k$. We compute the density~$e_{k,m}$ of those integers~$n$ for which $\omega_k(n)=m$ for every integer $m\ge0$. We also show that the generating function $\sum_{m=0}^\infty e_{k,m}z^m$ is an entire function that can be written in the form $\prod_{p} \bigl(1+{(p-1)(z-1)}/{p^{k+1}} \bigr)$; from this representation we show how to both numerically calculate the~$e_{k,m}$ to high precision and provide an asymptotic upper bound for the~$e_{k,m}$. We further show how to generalize these results to all additive functions of the form $\sum_{j=2}^\infty a_j \omega_j(n)$; when $a_j=j-1$ this recovers a classical result of R\'enyi on the distribution of $\Omega(n)-\omega(n)$.
\end{abstract}

\section{Introduction}

Let $\omega(n)$ be the number of distinct prime factors of a positive integer~$n$, and let $\Omega(n)$ be the number of prime factors of~$n$ counted with multiplicity. Average behaviours of such arithmetic functions are understood via their summatory functions. It is known~\cite{Hardy_Ramanujan} (see also~\cite[Theorems 427--430]{Hardy_Wright}) that
\begin{align}
\begin{split} \label{summatory_of_omegas}
\sum_{n\leqslant x} \omega(n)&=x\log\log x+bx+O\biggl(\frac{x}{\log x} \biggr) \\
\sum_{n\leqslant x} \Omega(n)&=x\log\log x+\biggl(b+\sum_{p } \frac{1}{p(p-1)} \biggr)x+O\biggl(\frac{x}{\log x} \biggr);
\end{split}
\end{align}
here the constant~$b$ is defined by
\begin{align} \label{b}
b&=\gamma_0+\sum_{p} \sum_{j=2}^{\infty} \frac{1}{jp^j}
\end{align}
where $\gamma_0$ denotes the Euler--Mascheroni constant. (In this paper, $\sum_p$ and $\prod_p$ always denote sums and products running over all prime numbers.) The celebrated Erd\H os--Kac theorem tells us that both $\omega(n)$ and $\Omega(n)$ can be normalized to have Gaussian limiting distribution functions.

By the asymptotic formulas~\eqref{summatory_of_omegas}, the difference $\Omega(n)-\omega(n)$ has an average value, namely the constant
\begin{align*}
\lim_{x\to\infty} \frac1x \sum_{n\leqslant x} \bigl( \Omega(n)-\omega(n) \bigr) = \sum_{p} \frac{1}{p(p-1)},
\end{align*}
which provides motivation to study the frequency of each possible value of $\Omega(n)-\omega(n)$. For any integer $m\geqslant 0$, define
\begin{align*}
\cN_m(x)=\lbrace n\leqslant x\colon \Omega(n)-\omega(n)=m\rbrace.
\end{align*}
R\'{e}nyi~\cite{Renyi} (see also~\cite[Section 2.4]{MV}) proved that the (natural) densities
\begin{align} \label{renyi}
d_m=\lim_{x\to \infty} \frac{\#\cN_m(x)}{x} = \frac{6}{\pi^2} \sum_{\substack{f\in\cF \\ \Omega(f)-\omega(f)=m}} \frac{1}{f} \prod_{p\mid f} \biggl(1+\frac{1}{p} \biggr)^{-1}
\end{align}
exist for every $m\geqslant0$, where $\cF$ is the set of powerful numbers (the set of positive integers all of whose prime factors have multiplicity $\geqslant 2$).
Furthermore, he showed that these densities have the generating function
\begin{align} \label{generating_series_Renyi}
\sum_{m=0}^{\infty}d_mz^m=\prod_{p} \biggl(1-\frac{1}{p} \biggr)\biggl(1+ \frac{1}{p-z} \biggr) \qquad (|z|<2).
\end{align}
(Note the special case $d_0 = \prod_p (1-\frac1p)(1+\frac1p) = \frac1{\zeta(2)} = \frac6{\pi^2}$ for the density of squarefree numbers, which can also be confirmed by realizing that the sum in equation~\eqref{renyi} contains only the single term $f=1$ when $m=0$.)
In particular, the smaller function $\Omega(n)-\omega(n)$ already has a (discrete) limiting distribution function, without needing normalization in the way that the larger functions $\omega(n)$ and $\Omega(n)$ individually do.

As a refinement of the function $\omega(n)$, Liu and the first author introduced the functions
\begin{align*}
\omega_k(n)=\sum_{p^k \| n}1
\end{align*}
for each integer $k\geqslant1$, so that $\omega_k(n)$ counts the number of prime factors of~$n$ with multiplicity~$k$ and thus $\omega(n) = \sum_{k=1}^\infty \omega_k(n)$. They showed~\cite{Elma_Liu} that
\begin{align*}
\sum_{n\leqslant x} \omega_1(n)=x\log\log x+\biggl(b-\sum_{p} \frac{1}{p^2} \biggr)x+O\biggl(\frac{x}{\log x} \biggr)
\end{align*}
where~$b$ is the constant from equation~\eqref{b}, while
\begin{align} \label{first_moment_omega_k2}
\sum_{n\leqslant x} \omega_k(n)=x\sum_{p} \frac{p-1}{p^{k+1}}+O\bigl(x^{(k+1)/(3k-1)} \log^2x\bigr) \qquad (k\geqslant 2).
\end{align}
They also showed that the larger function $\omega_1(n)$ has a Gaussian limiting distribution function after being normalized in the same way as the classical $\omega(n)$ and $\Omega(n)$. However, since equation~\eqref{first_moment_omega_k2} shows that $\omega_k(n)$ has an average value for each $k\geqslant2$, we might expect these smaller functions to have limiting distributions without needing to be normalized.

In this paper, we obtain the limiting distribution for the functions $\omega_k(n)$ for $k\geqslant 2$, analogous to the results of R\'{e}nyi described above. For integers $m\geqslant 0$, define
\begin{align} \label{the_set_N}
\cN_{k,m}(x)=\lbrace n\leqslant x\colon \omega_k(n)=m\rbrace
\end{align}
to be the set of positive integers $n\leqslant x$ with exactly~$m$ prime factors of multiplicity~$k$. Our main result establishes the existence of the densities
\begin{align*}
e_{k,m} = \lim_{x\to \infty} \frac{\#\cN_{k,m}(x)}{x}
\end{align*}
and provides a closed-form expression for them.

\begin{theorem} \label{main_result_1}
Uniformly for all integers $k\geqslant 2$ and $m\geqslant 0$,
\begin{align*}
\#\cN_{k,m}(x) = e_{k,m}x+O(x^{1/2} \log x)
\end{align*}
with
\begin{align*}
e_{k,m}= \frac{6}{\pi^2} \sum_{\substack{f\in\cF \\\omega_k(f)=m}} \frac{1}{f} \prod_{p\mid f} \biggl(1+\frac{1}{p} \biggr)^{-1}.
\end{align*}
\end{theorem}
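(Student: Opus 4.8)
The plan is to exploit the factorization of every positive integer $n$ as a product $n=sf$, where $f$ is the powerful part of $n$ (the product of the prime powers $p^{v_p(n)}$ over all primes $p$ with $v_p(n)\ge 2$), $s$ is squarefree, and $\gcd(s,f)=1$; this sets up a bijection between $\bN$ and the set of coprime pairs (squarefree number, powerful number). Since $k\ge 2$, a prime divides $n$ with multiplicity exactly $k$ if and only if it divides $f$ with multiplicity exactly $k$, so $\omega_k(n)=\omega_k(f)$, and therefore
\begin{equation*}
\#\cN_{k,m}(x)=\sum_{\substack{f\in\cF\\\omega_k(f)=m}}Q_f(x/f), \qquad\text{where } Q_f(y):=\#\bigl\{s\le y\colon \mu^2(s)=1,\ \gcd(s,f)=1\bigr\},
\end{equation*}
the sum being effectively restricted to $f\le x$.

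The second step is to estimate $Q_f(y)$ with explicit dependence on $f$. Writing $\mu^2(s)=\sum_{d^2\mid s}\mu(d)$ and then counting the integers up to $y/d^2$ coprime to $\mathrm{rad}(f)$ by inclusion--exclusion, one is led to
\begin{equation*}
Q_f(y)=\frac{6}{\pi^2}\,y\prod_{p\mid f}\Bigl(1+\frac1p\Bigr)^{-1}+O\biggl(\sqrt y\,\prod_{p\mid f}\bigl(1+p^{-1/2}\bigr)\biggr).
\end{equation*}
The key point is that the error is governed by $\prod_{p\mid f}(1+p^{-1/2})=\sum_{d\mid\mathrm{rad}(f)}d^{-1/2}$ rather than by the cruder $2^{\omega(f)}$; obtaining this refinement — instead of losing a power of the number of divisors of $f$ — is the place where the argument must be handled carefully, and it relies crucially on $f$ being powerful, so that every prime dividing $f$ is at most $\sqrt f$ and these divisor sums over $\mathrm{rad}(f)$ are genuinely small.

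It then remains to sum over $f$. For the main term we extend the sum to all powerful $f$ with $\omega_k(f)=m$; the discarded tail contributes $O\bigl(x\sum_{f\in\cF,\,f>x}f^{-1}\bigr)=O(x^{1/2})$ because the number of powerful integers up to $t$ is $O(t^{1/2})$, and the same estimate shows that the resulting series converges, so the main term is $e_{k,m}x+O(x^{1/2})$ with $e_{k,m}$ as claimed. The accumulated error is
\begin{equation*}
\ll x^{1/2}\sum_{\substack{f\in\cF\\f\le x}}f^{-1/2}\prod_{p\mid f}\bigl(1+p^{-1/2}\bigr),
\end{equation*}
and since the Dirichlet series $\sum_{f\in\cF}f^{-s}\prod_{p\mid f}(1+p^{-1/2})$ has an Euler product differing from $\zeta(2s)$ by a factor analytic and nonzero for $\mathrm{Re}\,s>1/3$, it has only a simple pole at $s=\tfrac12$; hence the displayed sum is $O(\log x)$ and the total error is $O(x^{1/2}\log x)$. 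Because every bound used above is a sum over \emph{all} powerful numbers, it is independent of $k$ and $m$, which gives the asserted uniformity.

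The step I expect to be the main obstacle is the estimate for $Q_f(y)$: one needs an error term whose dependence on $f$ is only marginally worse than bounded — just good enough that, after division by $f^{1/2}$ and summation over powerful $f\le x$, at most a single factor of $\log x$ is lost. Once that is in hand, the rest is routine manipulation of absolutely convergent sums over powerful numbers.
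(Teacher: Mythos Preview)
Your proposal is correct and follows essentially the same route as the paper: decompose $n$ into its squarefree and powerful parts, invoke the standard count of squarefree integers coprime to $f$ up to $y$ (the paper quotes this with error factor $\prod_{p\mid f}(1-p^{-1/2})^{-1}$ rather than your $\prod_{p\mid f}(1+p^{-1/2})$, but either form suffices), bound the tail $f>x$ via the $O(t^{1/2})$ count of powerful numbers, and bound the accumulated error sum over powerful $f\le x$ by $O(\log x)$. The only minor difference is that the paper obtains this last $O(\log x)$ by a direct Euler-product computation reducing to Mertens' theorem, whereas you sketch a Dirichlet-series comparison with $\zeta(2s)$; both arguments are standard and give the same conclusion.
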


\begin{remark} \label{abs conv remark}
Note that the $e_{k,m}$ are all nonnegative, and we can check that they do sum to~$1$:
\begin{align*}
\sum_{m=0}^\infty e_{k,m} &= \sum_{m=0}^\infty \frac{6}{\pi^2} \sum_{\substack{f\in\cF \\\omega_k(f)=m}} \frac{1}{f} \prod_{p\mid f} \biggl(1+\frac{1}{p} \biggr)^{-1} = \frac{6}{\pi^2} \sum_{f\in\cF} \frac{1}{f} \prod_{p\mid f} \biggl(1+\frac{1}{p} \biggr)^{-1}.
\end{align*}
Since the summand is a multiplicative function of~$f$, as is the indicator function of~$\cF$, the right-hand side equals its Euler product
\[
\frac{6}{\pi^2} \prod_p \biggl( 1 + 0 + \biggl(1+\frac{1}{p} \biggr)^{-1} \biggl( \frac1{p^2} + \frac1{p^3} + \cdots \biggr) \biggr) = \frac{6}{\pi^2} \prod_p \biggl( 1-\frac1{p^2} \biggr)^{-1} = 1.
\]
The same remark applies to the densities in equation~\eqref{formula_general_density} below.
\end{remark}

Moreover, we obtain an identity analogous to equation~\eqref{generating_series_Renyi} for the generating function of the densities $e_{k,m}$ for fixed $k\geqslant 2$, from which we can derive an upper bound for the densities $e_{k,m}$ when $k\geqslant 2$ is fixed and $m\to \infty$.

\begin{theorem} \label{main_result_2}
Let $k\geqslant 2$ be an integer. For all $z\in\bC$ with $|z|\leqslant1$,
\begin{equation} \label{main_eq_2}
\sum_{m=0}^{\infty}e_{k,m}z^{m}=\prod_{p} \biggl(1+\frac{(p-1)(z-1)}{p^{k+1}} \biggr).
\end{equation}
\end{theorem}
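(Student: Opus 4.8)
The plan is to obtain the identity~\eqref{main_eq_2} directly from the closed-form evaluation of $e_{k,m}$ in Theorem~\ref{main_result_1}, by rearranging the resulting sum over powerful numbers into an Euler product and simplifying each local factor; no Tauberian input is needed at this stage, since Theorem~\ref{main_result_1} already hands us the densities in closed form.

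First I would substitute $e_{k,m}=\frac6{\pi^2}\sum_{f\in\cF,\ \omega_k(f)=m}f^{-1}\prod_{p\mid f}(1+1/p)^{-1}$ into the power series and interchange the two summations. For $|z|\leqslant1$ this is legitimate by absolute convergence:
\[
\sum_{m=0}^\infty|z|^m\sum_{\substack{f\in\cF\\\omega_k(f)=m}}\frac1f\prod_{p\mid f}\Bigl(1+\frac1p\Bigr)^{-1}\leqslant\sum_{f\in\cF}\frac1f\prod_{p\mid f}\Bigl(1+\frac1p\Bigr)^{-1}=\frac{\pi^2}6,
\]
the final equality being precisely the Euler-product computation carried out in Remark~\ref{abs conv remark}. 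Hence
\[
\sum_{m=0}^\infty e_{k,m}z^m=\frac6{\pi^2}\sum_{f\in\cF}\frac{z^{\omega_k(f)}}{f}\prod_{p\mid f}\Bigl(1+\frac1p\Bigr)^{-1}.
\]
Next I would note that the function of $f$ equal to $\tfrac{z^{\omega_k(f)}}{f}\prod_{p\mid f}(1+1/p)^{-1}$ for $f\in\cF$ and to $0$ otherwise is multiplicative (each of $z^{\omega_k(f)}$, $1/f$, $\prod_{p\mid f}(1+1/p)^{-1}$, and the indicator of $\cF$ is multiplicative, the first because $\omega_k$ is additive). By the absolute convergence just established, the sum therefore factors as an Euler product $\prod_p\bigl(1+g(p)+g(p^2)+\cdots\bigr)$. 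Its local factor at $p$ is easy to evaluate: $p$ itself is not powerful, so that term vanishes, while for each $j\geqslant2$ the prime power $p^j$ lies in $\cF$ and $\omega_k(p^j)$ equals $1$ when $j=k$ and $0$ otherwise, so the local factor is
\[
1+\Bigl(1+\frac1p\Bigr)^{-1}\biggl(\frac{z-1}{p^k}+\sum_{j=2}^\infty\frac1{p^j}\biggr)=\frac{p^2}{p^2-1}+\frac{z-1}{(p+1)p^{k-1}}.
\]
Here the hypothesis $k\geqslant2$ enters, guaranteeing that the exceptional exponent $j=k$ actually occurs among $j\geqslant2$ (this is exactly the point at which $\omega_1$ would behave differently). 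Finally I would distribute the constant $\frac6{\pi^2}=\zeta(2)^{-1}=\prod_p(1-p^{-2})$ into the Euler product factor by factor — again valid because both products converge absolutely — and check that
\[
\Bigl(1-\frac1{p^2}\Bigr)\biggl(\frac{p^2}{p^2-1}+\frac{z-1}{(p+1)p^{k-1}}\biggr)=1+\frac{(p-1)(z-1)}{p^{k+1}},
\]
which yields~\eqref{main_eq_2}.

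I do not expect a genuine obstacle: the argument is a bookkeeping computation, and the only places demanding care are the absolute-convergence justifications for the two rearrangements (both reducible to Remark~\ref{abs conv remark}) and the use of $k\geqslant2$ in the local factor. As a coda matching the claim in the abstract, one can observe that $\bigl|(p-1)(z-1)p^{-k-1}\bigr|\leqslant|z-1|\,p^{-k}$, so $\sum_p\bigl|(p-1)(z-1)p^{-k-1}\bigr|$ converges for every $z\in\bC$; hence the right-hand product of~\eqref{main_eq_2} is in fact an entire function of~$z$, and the identity extends the power series from $|z|\leqslant1$ to all of~$\bC$.
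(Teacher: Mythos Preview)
Your proof is correct and follows essentially the same route as the paper's: substitute the closed form for $e_{k,m}$ from Theorem~\ref{main_result_1}, swap the sums (justified via Remark~\ref{abs conv remark}), factor the resulting sum over $\cF$ as an Euler product by multiplicativity, and simplify each local factor after absorbing $6/\pi^2=\prod_p(1-p^{-2})$. The paper carries this out in the generality of Theorem~\ref{main_result_4} and then specializes, but the argument is the same computation.
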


\begin{corollary} \label{main_cor_2}
For each fixed $k\geqslant2$, we have $e_{k,m} \leqslant m^{-(k-o(1))m}$ as $m\to\infty$.
\end{corollary}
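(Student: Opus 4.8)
The plan is to combine the product formula of Theorem~\ref{main_result_2} with Cauchy's estimates for Taylor coefficients. First I would observe that $F_k(z):=\prod_p\bigl(1+(p-1)(z-1)/p^{k+1}\bigr)$ is an entire function: on a disk $|z|\le\rho$ one has $\sum_p\bigl|(p-1)(z-1)/p^{k+1}\bigr|\le(\rho+1)\sum_p p^{-k}<\infty$ since $k\ge2$, so the product converges locally uniformly. By Theorem~\ref{main_result_2}, $F_k$ agrees on the open unit disk with the power series $\sum_{m\ge0}e_{k,m}z^m$, which converges there because $e_{k,m}\ge0$ and $\sum_m e_{k,m}=1$ by Remark~\ref{abs conv remark}; hence this power series is the Taylor expansion of the entire function $F_k$ at the origin, and for every $R>0$ Cauchy's formula gives
\begin{equation*}
e_{k,m}=\frac{1}{2\pi i}\oint_{|z|=R}\frac{F_k(z)}{z^{m+1}}\,dz,\qquad\text{whence}\qquad e_{k,m}\le R^{-m}\max_{|z|=R}|F_k(z)|.
\end{equation*}

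Next I would bound $\max_{|z|=R}|F_k(z)|$. Since $(p-1)/p^{k+1}\le p^{-k}$ and $|z-1|\le R+1$ for $|z|=R$, each factor satisfies $\bigl|1+(p-1)(z-1)/p^{k+1}\bigr|\le1+(R+1)p^{-k}$, so $\max_{|z|=R}|F_k(z)|\le G_k(R+1)$, where $G_k(w):=\prod_p(1+w/p^k)$. To estimate $\log G_k(w)=\sum_p\log(1+w/p^k)$ as $w\to\infty$ I would split the sum at $p=w^{1/k}$: for $p\le w^{1/k}$ bound $\log(1+w/p^k)\le\log(1+w)$ and use the Chebyshev estimate $\pi(w^{1/k})\ll_k w^{1/k}/\log w$; for $p>w^{1/k}$ bound $\log(1+w/p^k)\le w/p^k$ and use the tail estimate $\sum_{n>y}n^{-k}\ll_k y^{1-k}$. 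Each piece is $O_k(w^{1/k})$, so there is a constant $C_k>0$ with $\log G_k(w)\le C_kw^{1/k}$ for all large $w$, and therefore $e_{k,m}\le\exp\bigl(C_k(R+1)^{1/k}-m\log R\bigr)$.

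Finally I would optimize over $R$. Taking $R=(mk/C_k)^k$ gives $C_k(R+1)^{1/k}=C_kR^{1/k}+O_k(R^{1/k-1})=mk+o(1)$ while $m\log R=mk\log(mk/C_k)=mk\log m+O_k(m)$, so the exponent equals $-mk\log m+O_k(m)=-(k-o(1))m\log m$, which yields $e_{k,m}\le m^{-(k-o(1))m}$ as $m\to\infty$.

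The step I expect to be the main obstacle is making the growth of $F_k$ precise enough. The crude bound $\bigl|1+(p-1)(z-1)/p^{k+1}\bigr|\le\exp\bigl((R+1)(p-1)/p^{k+1}\bigr)$ only gives $|F_k(z)|\le\exp(O_k(R))$, hence the much weaker conclusion $e_{k,m}\le m^{-(1-o(1))m}$. The factor $k$ in the exponent of the corollary comes exactly from the fact that $\log G_k(w)$ grows like $w^{1/k}$ rather than like $w$: the ``phase transition'' near $p\approx w^{1/k}$ in $\sum_p\log(1+w/p^k)$, quantified via Chebyshev's bound and the tail estimate above, is what does the essential work, even though those inputs are themselves standard.
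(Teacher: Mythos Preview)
Your proof is correct. The core idea---bounding $\log\prod_p(1+w/p^k)$ by splitting the sum over primes at $p\approx w^{1/k}$---is exactly what drives the paper's argument as well, so the two proofs share the same essential analytic content.

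Where they differ is in the second half. The paper packages the growth estimate more precisely, proving a two-sided bound $\log P(x)\asymp x^{1/k}/\log x$ (Proposition~\ref{general decay prop}, using the prime number theorem rather than just Chebyshev), and then invokes the classical theorem linking the order of an entire function to the decay of its Maclaurin coefficients (Lemma~\ref{lemma_for_the_order}, quoted from Boas). You instead prove only the upper bound $\log G_k(w)\le C_k w^{1/k}$, which is all the corollary needs, and then carry out by hand the Cauchy-plus-optimization step that the cited black box would otherwise supply. Your route is more self-contained and slightly more elementary; the paper's route, by virtue of the matching lower bound on growth, additionally yields that the order is \emph{exactly} $1/k$ and hence that the inequality $e_{k,m}\le m^{-(k-o(1))m}$ is attained for infinitely many~$m$ (as noted in the remark following the corollary), a sharpness statement your one-sided argument does not recover.
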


\begin{remark}
The proof of the upper bound in Corollary~\ref{main_cor_2} (see Section~\ref{section_for_main_results_2_and_4}) shows that for each $k\geqslant2$, the bound is attained for infinitely many~$m$; it would be interesting to try to show that $e_{k,m} = m^{-(k-o(1))m}$ for all~$k$ and~$m$. Moreover, the corollary and its proof show that both sides of equation~\eqref{main_eq_2} converge to entire functions, and thus Theorem~\ref{main_result_2} actually holds for all $z\in\bC$ by uniqueness of analytic continuation. The same remarks apply to the generating functions in Corollary~\ref{main_result_4seo} and the upper bounds in Corollary~\ref{main_cor_4} below.
\end{remark}

Some numerical values of $e_{k,m}$ are given in Table~\ref{ekm table}. The numbers in the first column corresponding to $m=0$ are increasing as $k$ increases, whereas the numbers in other columns are decreasing. This behaviour stems from the fact that the case $m=0$ indicates the nonexistence of prime factors with multiplicity~$k$, which becomes more probable as~$k$ increases. (Note also that each number in the first column exceeds $\frac6{\pi^2} \approx 0.608$, since every squarefree number~$n$ certainly has $\omega_k(n)=0$ for all $k\geqslant2$.) On the other hand, for $m\geqslant 1$, the criterion $\omega_{k}(n)=m$ indicates the existence of prime factors with multiplicity~$k$, which becomes less probable as~$k$ increases. Details of the calculations of these values are given in Section~\ref{section_calculations}, although we do note here that the calculations use the generating function in Theorem~\ref{main_result_2} rather than the formula for $e_{k,m}$ in Theorem~\ref{main_result_1}.

\begin{table}[hbt]
\caption{Some values of $e_{k,m}$}
\label{ekm table}
\begin{tabular}{|c|cccc|}
\hline
& $m=0$ & $m=1$ & $m=2$ & $m=3$ \\
\hline
$k=2$ & 0.748535831 & 0.226618489 & 0.023701061 & 0.001117529 \\
\hline
$k=3$ & 0.904708927 & 0.092831692 & 0.002440388 & 0.000018941 \\
\hline
$k=4$ & 0.959088654 & 0.040585047 & 0.000325821 & 0.000000477 \\
\hline
$k=5$ & 0.981363751 & 0.018587581 & 0.000048654 & 0.000000014 \\
\hline
\end{tabular}
\end{table}

A consequence of Theorem~\ref{main_result_1} and Remark~\ref{abs conv remark} is that $\omega_k(n)$ has a limiting distribution, which is the same as the distribution of the nonnegative integer-valued random variable $X_k$ that takes the value~$m$ with probability~$e_{k,m}$. While it is straightforward to calculate the expectation and variance of this limiting distribution via the expressions
\[
\lim_{x\to\infty} \frac1x \sum_{n\leqslant x} \omega_k(n) \qquad\text{and}\qquad \lim_{x\to\infty} \frac1x \sum_{n\leqslant x} \omega_k(n)^2 - \biggl( \lim_{x\to\infty} \frac1x \sum_{n\leqslant x} \omega_k(n) \biggr)^2,
\]
we can observe that the generating function from Theorem~\ref{main_result_2} provides a quick way to obtain the answers with no further input from number theory.

\begin{corollary} \label{corollary_X}
The limiting distribution of $\omega_k(n)$ has expectation $\displaystyle\sum_{p} \frac{p-1}{p^{k+1}}$ and variance $\displaystyle\sum_p \frac{p-1}{p^{k+1}} \biggl( 1 - \frac{p-1}{p^{k+1}} \biggr)$.
\end{corollary}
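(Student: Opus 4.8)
The plan is to extract the moments of $X_k$ directly from the generating function $G_k(z) = \sum_{m=0}^\infty e_{k,m} z^m = \prod_p \bigl(1 + (p-1)(z-1)/p^{k+1}\bigr)$ supplied by Theorem~\ref{main_result_2}, using the standard facts that $\bE[X_k] = G_k'(1)$ and $\Var(X_k) = G_k''(1) + G_k'(1) - G_k'(1)^2$. Since we already know (from the Remark following Corollary~\ref{main_cor_2}, or simply from the fact that each factor is entire and the product converges locally uniformly because $(p-1)/p^{k+1} \ll p^{-k} \le p^{-2}$) that $G_k$ is entire, differentiation under the product and term-by-term manipulation are fully justified; in particular the first two derivatives at $z=1$ exist and the power series may be differentiated termwise there.

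First I would take the logarithmic derivative: writing $g_p(z) = 1 + (p-1)(z-1)/p^{k+1}$, we have $G_k'/G_k = \sum_p g_p'/g_p = \sum_p \dfrac{(p-1)/p^{k+1}}{1 + (p-1)(z-1)/p^{k+1}}$. Evaluating at $z=1$, every factor $g_p(1) = 1$, so $G_k(1) = 1$ (consistent with Remark~\ref{abs conv remark}) and
\[
\bE[X_k] = G_k'(1) = \sum_p \frac{p-1}{p^{k+1}}.
\]
For the variance I would differentiate $G_k'(z) = G_k(z)\sum_p g_p'(z)/g_p(z)$ once more, obtaining $G_k''(z) = G_k(z)\bigl[\bigl(\sum_p g_p'/g_p\bigr)^2 + \sum_p (g_p''/g_p - (g_p'/g_p)^2)\bigr]$. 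Here $g_p'' \equiv 0$ since each $g_p$ is linear in $z$, so at $z=1$ this collapses to $G_k''(1) = G_k'(1)^2 - \sum_p \bigl((p-1)/p^{k+1}\bigr)^2$. Substituting into $\Var(X_k) = G_k''(1) + G_k'(1) - G_k'(1)^2$ gives
\[
\Var(X_k) = \sum_p \frac{p-1}{p^{k+1}} - \sum_p \biggl(\frac{p-1}{p^{k+1}}\biggr)^2 = \sum_p \frac{p-1}{p^{k+1}}\biggl(1 - \frac{p-1}{p^{k+1}}\biggr),
\]
as claimed.

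There is essentially no hard part here: the only points requiring a word of care are (i) justifying that $G_k$ is entire so that the termwise differentiation of $\sum e_{k,m} z^m$ at $z=1$ is legitimate — which follows from the locally uniform convergence of the product $\prod_p g_p(z)$, each $|g_p(z)-1| \ll_k p^{-2}$ on compact sets — and (ii) justifying the interchange of differentiation with the infinite product/sum, again immediate from locally uniform convergence of $\sum_p g_p'/g_p$ and $\sum_p (g_p'/g_p)^2$ near $z=1$ (both are dominated by $\sum_p p^{-2k+\varepsilon}$). One could alternatively phrase the whole computation probabilistically: $X_k$ has the same distribution as $\sum_p Y_p$ where the $Y_p$ are independent Bernoulli-type variables with $\bP(Y_p = 1) = (p-1)/p^{k+1}$ and $\bP(Y_p=0) = 1 - (p-1)/p^{k+1}$ — which is exactly what the product form of $G_k$ encodes — and then the formulas for $\bE$ and $\Var$ are just additivity of expectation and of variance over independent summands; I would mention this interpretation as a remark since it makes the answer transparent.
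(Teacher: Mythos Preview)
Your proof is correct and follows essentially the same route as the paper: both compute $G_k'(1)$ and $G_k''(1)$ via the logarithmic derivative of the Euler product and then feed them into the standard identities $\bE[X_k]=G_k'(1)$ and $\Var(X_k)=G_k''(1)+G_k'(1)-G_k'(1)^2$. The only cosmetic difference is that the paper packages the logarithmic-derivative step into its general Proposition~\ref{recursive general} (writing $P_k'(z)=P_k(z)S_k(1,z)$ and $P_k''(z)=P_k'(z)S_k(1,z)+P_k(z)S_k(2,z)$), while you carry out the same computation by hand; your remark on the Bernoulli interpretation also matches the paper's own remark following the corollary.
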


\begin{remark}
Not surprisingly, these quantities are the expectation and variance of the sum of infinitely many Bernoulli random variables $B_p$, indexed by primes~$p$, where $B_p$ takes the value~$1$ with probability $(p-1)/p^{k+1}$ (the density of those integers exactly divisible by~$p^k$).
\end{remark}

These quantities are easy to calculate to reasonably high precision (see Section~\ref{section_calculations} for details); we record some numerical values in Table~\ref{E and var table}. The reader can confirm that the listed expectations are in good agreement with the quantities $0e_{k,0}+1e_{k,1}+2e_{k,2}+3e_{k,3}$ as calculated from Table~\ref{ekm table}.

\begin{table}[hbt]
\caption{Statistics of the limiting distribution of $\omega_k(n)$}
\label{E and var table}
\begin{tabular}{|c|c|c|}
\hline
& expectation of $\omega_k(n)$ & variance of $\omega_k(n)$ \\
\hline
$k=2$ & 0.277484775 & 0.254931583 \\
\hline
$k=3$ & 0.097769500 & 0.093205673 \\
\hline
$k=4$ & 0.041238122 & 0.040192048 \\
\hline
$k=5$ & 0.018684931 & 0.018433195 \\
\hline
\end{tabular}
\end{table}

\subsection{Generalizations}

It turns out that our proof of Theorem~\ref{main_result_1} goes through for a far larger class of additive functions than just the $\omega_k(n)$. Given any sequence $A=(a_1,a_2, a_3 ,\dots)$ of complex numbers, define the additive function
\begin{equation} \label{omega A def}
\omega_{A}(n)=\sum_{j=1}^{\infty}a_j\omega_{j}(n),
\end{equation}
which is of course a finite sum for each integer~$n$.

\begin{remark} \label{special cases}
This definition generalizes all the examples we have seen so far: 
\begin{itemize}
\item if $a_j=1$ always then $\omega_{A}(n) = \omega(n)$;
\item if $a_j=j$ always then $\omega_{A}(n) = \Omega(n)$;
\item if $a_j=j-1$ always then $\omega_{A}(n)=\Omega(n)-\omega(n)$;
\item for a fixed positive integer~$k$, if $a_k=1$ while $a_j=0$ for $j\neq k$, then $\omega_{A}(n)=\omega_k(n)$.
\end{itemize}
\end{remark}

When $a_1\ne 0$, classical techniques show that the large function $\frac1{a_1} \omega_A(n)$ has the same Gaussian limiting distribution as $\omega(n)$ and $\Omega(n)$ when properly normalized (at least if the~$a_j$ do not grow too quickly). Therefore we restrict our attention to the smaller functions $\omega_A(n)$ where $a_1=0$, which we expect to have limiting distributions without needing normalization.

For $m\in\bC$, define
\begin{align*}
\cN_{A,m}(x)=\lbrace n\leqslant x \colon \omega_{A}(n)=m\rbrace .
\end{align*}
Our next result, which generalizes both equation~\eqref{renyi} and Theorem~\ref{main_result_1}, establishes the existence of the densities
\begin{align} \label{defn_general_density}
e_{A,m} = \lim_{x\to \infty} \frac{\#\cN_{A,m}(x)}{x}
\end{align}
and provides a closed-form expression for them.

\begin{theorem} \label{main_result_3}
Uniformly for all sequences $A=(0,a_2,a_3,\dots)$ of complex numbers with $a_1=0$ and for all $m\in\bC$,
\begin{align*}
\#\cN_{A,m}(x) = e_{A,m}x+O(x^{1/2} \log x)
\end{align*}
with
\begin{align} \label{formula_general_density}
e_{A,m}= \frac{6}{\pi^2} \sum_{\substack{f\in\cF \\ \omega_A(f) = m}} \frac{1}{f} \prod_{p\mid f} \biggl(1+\frac{1}{p} \biggr)^{-1}.
\end{align}
\end{theorem}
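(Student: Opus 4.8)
The key point is that the hypothesis $a_1=0$ forces $\omega_A(n)$ to depend only on the powerful part of~$n$, so the plan is to reduce everything to counting squarefree integers. Write each positive integer uniquely as $n=ab$ with $a$ squarefree, $b\in\cF$ powerful, and $\gcd(a,b)=1$; concretely $b=\prod_{p^j\|n,\,j\ge 2}p^j$ and $a=\prod_{p\|n}p$. Then $\omega_j(n)=\omega_j(b)$ for every $j\ge2$, while $\omega_1(n)=\omega(a)$ and $\omega_1(b)=0$, so $\omega_A(n)=a_1\omega(a)+\omega_A(b)=\omega_A(b)$. Since the integers $n\le x$ whose powerful part equals a given $b\in\cF$ are exactly the $n=ab$ with $a\le x/b$ squarefree and coprime to~$b$, we obtain
\[
\#\cN_{A,m}(x)=\sum_{\substack{b\in\cF,\ b\le x\\ \omega_A(b)=m}}Q_b(x/b),\qquad Q_b(y):=\#\{a\le y:\mu^2(a)=1,\ \gcd(a,b)=1\}.
\]

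Next I would record the standard estimate for $Q_b(y)$. Writing $r=\operatorname{rad}(b)$ and using $\mu^2(a)=\sum_{d^2\mid a}\mu(d)$ together with the expansion of the indicator of $\gcd(a,r)=1$ as $\sum_{e\mid\gcd(a,r)}\mu(e)$, one interchanges sums and replaces $\lfloor y/(d^2e)\rfloor$ by $y/(d^2e)+O(1)$ to get
\[
Q_b(y)=y\Bigl(\sum_{e\mid r}\tfrac{\mu(e)}{e}\Bigr)\Bigl(\sum_{\gcd(d,r)=1}\tfrac{\mu(d)}{d^2}\Bigr)+O\Bigl(\sqrt y\sum_{e\mid r}e^{-1/2}\Bigr)=C(b)\,y+O\Bigl(\sqrt y\prod_{p\mid b}\bigl(1+\tfrac1{\sqrt p}\bigr)\Bigr),
\]
where $C(b)=\frac{\varphi(r)}{r}\cdot\frac1{\zeta(2)}\prod_{p\mid r}(1-p^{-2})^{-1}=\frac6{\pi^2}\prod_{p\mid b}(1+1/p)^{-1}$; the tail of the completed $d$-sum contributes only to the same error.

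Finally I would substitute this into the displayed sum over~$b$. For the main term, $x\sum_{b\in\cF,\,b\le x,\,\omega_A(b)=m}C(b)/b$ equals $e_{A,m}x$ minus $x\sum_{b\in\cF,\,b>x,\,\omega_A(b)=m}C(b)/b$; since $\#\{b\in\cF:b\le t\}\ll\sqrt t$ we have $\sum_{b\in\cF,\,b>x}1/b\ll x^{-1/2}$ and $C(b)\le\frac6{\pi^2}$, so this tail is $O(x^{1/2})$, and the constant that remains is exactly the absolutely convergent series in~\eqref{formula_general_density} (its convergence was already checked in Remark~\ref{abs conv remark}). For the error term I need $\sum_{b\in\cF,\,b\le x}b^{-1/2}\prod_{p\mid b}(1+p^{-1/2})\ll\log x$: by Rankin's trick this is $\le x^{\varepsilon}\sum_{b\in\cF}b^{-1/2-\varepsilon}\prod_{p\mid b}(1+p^{-1/2})$, and the Euler product of this last series equals $\zeta(1+2\varepsilon)$ times a factor that stays bounded as $\varepsilon\to0^+$, hence is $\ll\varepsilon^{-1}$; taking $\varepsilon=1/\log x$ gives $O(\log x)$. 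This yields $\#\cN_{A,m}(x)=e_{A,m}x+O(x^{1/2}\log x)$, uniformly in $A$ and $m$ since the only dependence on them is through the set $\{b\in\cF:\omega_A(b)=m\}$, over which a nonnegative quantity is being summed. Specializing recovers Theorem~\ref{main_result_1} (take $a_k=1$ and all other $a_j=0$) and Rényi's formula~\eqref{renyi} (take $a_j=j-1$).

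The reduction to the powerful part and the squarefree count are routine, and I expect the only place calling for care to be the error estimate: it matters that the coprimality sieve is bounded by $\prod_{p\mid b}(1+p^{-1/2})$ rather than the cruder $2^{\omega(b)}$, since the latter would make $\sum_{b\in\cF,\,b\le x}b^{-1/2}2^{\omega(b)}$ grow like a larger power of $\log x$ and spoil the exponent in $O(x^{1/2}\log x)$. So the main (though modest) obstacle is assembling this uniform error bound over powerful numbers cleanly.
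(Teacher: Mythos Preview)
Your argument is correct and follows essentially the same route as the paper: decompose $n$ into its squarefree and powerful parts, use $a_1=0$ to reduce to $\omega_A$ of the powerful part, apply the standard asymptotic for squarefree integers coprime to a given modulus, and control the tail via Golomb's $O(\sqrt t)$ count of powerful numbers. The only noticeable differences are cosmetic: the paper cites the squarefree--coprime count from Montgomery--Vaughan with error factor $\prod_{p\mid f}(1-p^{-1/2})^{-1}$ rather than your (slightly smaller) $\prod_{p\mid b}(1+p^{-1/2})$, and it bounds the resulting sum $\sum_{f\in\cF,\,f\le x} f^{-1/2}\prod_{p\mid f}(1-p^{-1/2})^{-1}$ by a direct Euler-product computation over $p\le x$ rather than by your Rankin-trick argument with $\varepsilon=1/\log x$; both give the same $O(\log x)$.
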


If we now restrict to the case where the~$a_j$ (and thus all values of $\omega_A(n)$) are nonnegative integers, it once again makes sense to consider generating functions.
Our next result generalizes both equation~\eqref{generating_series_Renyi} and Theorem~\ref{main_result_2} in light of Remark~\ref{special cases}.

\begin{theorem} \label{main_result_4}
Let $A=(0,a_2,a_3,\dots)$ be a sequence of nonnegative integers. For all $z\in\bC$ with $|z|\leqslant1$,
\begin{align} \label{generating_series_general}
\sum_{m=0}^{\infty}e_{A,m}z^{m}=\prod_{p} \biggl(1-\frac{1}{p^2}+\sum_{j=2}^{\infty}z^{a_j} \biggl(\frac{1}{p^{j}}-\frac{1}{p^{j+1}} \biggr)\biggr).
\end{align}
\end{theorem}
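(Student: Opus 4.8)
The plan is to obtain the generating function directly from the closed-form expression for $e_{A,m}$ provided by Theorem~\ref{main_result_3}, the essential observation being that the resulting double sum factors as an Euler product over the powerful numbers. First I would substitute
\[
e_{A,m}= \frac{6}{\pi^2} \sum_{\substack{f\in\cF \\ \omega_A(f) = m}} \frac{1}{f} \prod_{p\mid f} \biggl(1+\frac{1}{p} \biggr)^{-1}
\]
into $\sum_{m=0}^\infty e_{A,m}z^m$ and interchange the two summations, collapsing the sum over~$m$ and the inner sum over powerful~$f$ with $\omega_A(f)=m$ into a single sum over all $f\in\cF$:
\[
\sum_{m=0}^\infty e_{A,m}z^m = \frac{6}{\pi^2} \sum_{f\in\cF} \frac{z^{\omega_A(f)}}{f} \prod_{p\mid f} \biggl(1+\frac{1}{p} \biggr)^{-1}.
\]
This rearrangement is valid simultaneously for all $z$ with $|z|\le1$: since the~$a_j$ are nonnegative integers we have $|z^{\omega_A(f)}|\le1$, so the double series is dominated term-by-term, uniformly in~$z$, by the series $\sum_{f\in\cF} f^{-1}\prod_{p\mid f}(1+1/p)^{-1}$, which converges (to $\pi^2/6$) as noted in Remark~\ref{abs conv remark}.

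Next I would note that the summand $f\mapsto z^{\omega_A(f)} f^{-1}\prod_{p\mid f}(1+1/p)^{-1}$ is a multiplicative function of~$f$ (the factor $z^{\omega_A(\cdot)}$ is multiplicative because $\omega_A$ is additive), as is the indicator function of~$\cF$. Hence the sum over $f\in\cF$ has an Euler product whose local factor at~$p$ involves only prime powers $p^j$ with $j\ge2$, because every powerful number has all prime exponents at least~$2$; using $\omega_A(p^j)=a_j$ this gives
\[
\sum_{f\in\cF} \frac{z^{\omega_A(f)}}{f} \prod_{p\mid f} \biggl(1+\frac{1}{p} \biggr)^{-1} = \prod_p \Biggl(1 + \biggl(1+\frac1p\biggr)^{-1} \sum_{j=2}^\infty \frac{z^{a_j}}{p^j} \Biggr),
\]
the product converging absolutely since each factor equals $1+O(p^{-2})$.

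Finally I would absorb the constant $\frac6{\pi^2}=\prod_p(1-p^{-2})$ into this Euler product and simplify each local factor by means of the identity $(1-p^{-2})(1+p^{-1})^{-1}=1-p^{-1}$, so that the factor at~$p$ becomes
\[
1-\frac1{p^2} + \biggl(1-\frac1p\biggr)\sum_{j=2}^\infty \frac{z^{a_j}}{p^j} = 1-\frac1{p^2} + \sum_{j=2}^\infty z^{a_j}\biggl(\frac1{p^j}-\frac1{p^{j+1}}\biggr),
\]
which is precisely the $p$-th factor on the right-hand side of~\eqref{generating_series_general}. There is really no serious obstacle in this argument; the only points that need care are the uniform domination that justifies swapping the order of summation for all $|z|\le1$ at once, and the bookkeeping observation that the Euler factor at~$p$ omits the $j=1$ term exactly because elements of~$\cF$ have no prime factor of multiplicity~$1$.
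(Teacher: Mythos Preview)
Your proposal is correct and follows essentially the same argument as the paper's proof: substitute the formula for $e_{A,m}$ from Theorem~\ref{main_result_3}, collapse the double sum into $\frac{6}{\pi^2}\sum_{f\in\cF} z^{\omega_A(f)} f^{-1}\prod_{p\mid f}(1+1/p)^{-1}$, expand as an Euler product, and absorb $6/\pi^2=\prod_p(1-p^{-2})$ into the local factors. Your justification of the interchange via termwise domination is slightly more explicit than the paper's (which simply appeals to Remark~\ref{abs conv remark} for absolute convergence), but the substance is the same.
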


Again this theorem shows that $\omega_A(n)$ has a limiting distribution when the $a_j$ are nonnegative integers, and we can therefore generalize Corollary~\ref{corollary_X}; we record only the expectation for simplicity.

\begin{corollary} \label{corollary_Y}
Let $A=(0,a_2,a_3,\dots)$ be a sequence of nonnegative integers. The limiting distribution of $\omega_A(n)$ has expectation $\displaystyle\sum_{p} \sum_{j=2}^{\infty}a_j\biggl(\frac{1}{p^{j}}-\frac{1}{p^{j+1}} \biggr)$.
\end{corollary}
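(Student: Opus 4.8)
The plan is to read off the expectation by differentiating the generating function of Theorem~\ref{main_result_4} at $z=1$, exactly as one would deduce Corollary~\ref{corollary_X} from Theorem~\ref{main_result_2}. Write $G(z)=\sum_{m=0}^{\infty}e_{A,m}z^{m}$ and, for each prime~$p$, set
\[
L_p(z)=1-\frac1{p^2}+\sum_{j=2}^{\infty}z^{a_j}\biggl(\frac1{p^{j}}-\frac1{p^{j+1}}\biggr),
\]
so that Theorem~\ref{main_result_4} reads $G(z)=\prod_p L_p(z)$ for $|z|\leqslant1$. Since the $e_{A,m}$ are nonnegative with $\sum_m e_{A,m}=1$, the limiting distribution has a well-defined expectation $E:=\sum_{m=0}^{\infty}me_{A,m}\in[0,\infty]$, and because the coefficients $me_{A,m}$ are nonnegative the monotone convergence theorem gives $E=\lim_{z\to1^-}G'(z)$. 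Thus it suffices to evaluate this one-sided limit.

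First I would record two elementary facts about the local factors. The sum $\sum_{j\geqslant2}(p^{-j}-p^{-j-1})=p^{-2}$ telescopes, so $L_p(1)=1$ for every~$p$ (whence $G(1)=1$, recovering Remark~\ref{abs conv remark}); moreover each $L_p$ is real-analytic and positive on $[0,1]$, with
\[
L_p'(z)=\sum_{j=2}^{\infty}a_jz^{a_j-1}\biggl(\frac1{p^{j}}-\frac1{p^{j+1}}\biggr)\geqslant0 \quad\text{on }[0,1],
\]
where the terms with $a_j=0$ vanish, so that $0\leqslant L_p'(z)\leqslant L_p'(1)=\sum_{j\geqslant2}a_j(p^{-j}-p^{-j-1})$ on $[0,1]$. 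On the interval $(0,1)$ the product $\prod_p L_p$ converges locally uniformly (indeed $|L_p(z)-1|=\bigl|\sum_{j\geqslant2}(z^{a_j}-1)(p^{-j}-p^{-j-1})\bigr|\leqslant 2p^{-2}$ for $|z|\leqslant1$), so logarithmic differentiation is valid there and
\[
G'(z)=G(z)\sum_{p}\frac{L_p'(z)}{L_p(z)} \qquad (0<z<1).
\]

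It then remains to let $z\to1^-$. Since $G(z)\to G(1)=1$, the claim reduces to showing $\sum_p L_p'(z)/L_p(z)\to\sum_p L_p'(1)$. Fatou's lemma gives the lower bound $\liminf_{z\to1^-}\sum_p L_p'(z)/L_p(z)\geqslant\sum_p L_p'(1)$ in $[0,\infty]$, since $L_p'(z)/L_p(z)\to L_p'(1)$ for each~$p$. If $\sum_p L_p'(1)<\infty$, then the bound $L_p'(z)/L_p(z)\leqslant\frac43 L_p'(z)\leqslant\frac43 L_p'(1)$ (using $L_p(z)\geqslant 1-p^{-2}\geqslant\frac34$) lets dominated convergence supply the matching upper bound; if $\sum_p L_p'(1)=\infty$, the lower bound already forces the limit to be $+\infty$. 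Either way $E=\sum_{p}\sum_{j\geqslant2}a_j(p^{-j}-p^{-j-1})$, with both sides finite or infinite together; specializing $a_k=1$, $a_j=0$ otherwise, recovers Corollary~\ref{corollary_X}. The only genuine obstacle here is the passage to the boundary point $z=1$ in the logarithmically-differentiated identity, and I expect it to be handled purely through positivity and the crude estimate $L_p(z)\geqslant\frac34$, so that no hypothesis restricting the growth of the~$a_j$ is needed.
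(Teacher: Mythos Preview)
Your proof is correct and follows the same approach as the paper: logarithmically differentiate the product formula of Theorem~\ref{main_result_4} and evaluate at $z=1$. Your treatment is in fact more careful than the paper's, which simply writes $\bE[X_A]=P_A'(1)$ and computes formally via $P'=P\cdot(\log P)'$; you justify the boundary passage $z\to1^{-}$ explicitly with monotone convergence, Fatou, and dominated convergence (using $L_p(z)\geqslant\tfrac34$), thereby handling the infinite-expectation case without additional hypotheses on the~$a_j$.
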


\begin{remark}
It is certainly possible for this expectation to be infinite, as the example $A=(0,2,4,8,16,\dots)$ shows. In such cases $\frac1x
\sum_{n\leqslant x} \omega_A(n)$ grows too quickly for the mean value of $\omega_A(n)$ to exist. Note, however, that Theorems~\ref{main_result_3} and~\ref{main_result_4} hold no matter how quickly the sequence~$A$ might grow.
\end{remark}

We examine three specific examples of such sequences for the purposes of illustration: set $S=(0,1,1,\dots)$ and $E = (0,1,0,1,\dots)$ and $O = (0,0,1,0,1,0,1,\dots)$. Then the corresponding omega functions are
\begin{equation*}
\omega_{{S}}(n) =\sum_{j\geqslant 2} \omega_j(n) \quad\text{and} \quad
\omega_{{E}}(n) =\sum_{\substack{j\geqslant 2\\ j \text{ even}}} \omega_{j}(n) \quad\text{and} \quad
\omega_{{O}}(n) =\sum_{\substack{j\geqslant 3\\ j \text{ odd}}} \omega_{j}(n)
\end{equation*}
which count, respectively, the number of primes dividing the powerful part of~$n$ (that is, the number of primes dividing~$n$ at least twice), the number of primes dividing~$n$ with even multiplicity, and the number of primes dividing~$n$ with odd multiplicity exceeding~$1$. For integers $m\geqslant 0$, let $e_{{S},m}$ and $e_{{E},m}$ and $e_{{O},m}$ be the corresponding densities defined in equation~\eqref{defn_general_density}. An easy calculation of the right-hand side of equation~\eqref{generating_series_general} in these cases (for which each factor becomes a geometric series) yields the following generating functions:

\begin{corollary} \label{main_result_4seo}
For all $z\in\bC$ with $|z|\leqslant1$,
\begin{align*}
\sum_{m=0}^{\infty}e_{{S},m}z^{m}&=\prod_{p} \biggl(1+\frac{z-1}{p^{2}} \biggr)
\\\sum_{m=0}^{\infty}e_{{E},m}z^{m}&=\prod_{p} \biggl(1+\frac{z-1}{p(p+1)} \biggr)
\\\sum_{m=0}^{\infty}e_{{O},m}z^{m}&=\prod_{p} \biggl(1+\frac{z-1}{p^{2}(p+1)} \biggr).
\end{align*}
\end{corollary}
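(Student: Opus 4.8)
The plan is to derive each of the three identities by specializing the general generating function formula in Theorem~\ref{main_result_4} to the sequences $S$, $E$, and $O$, and then summing the resulting geometric series factor by factor. For each prime~$p$, the Euler factor in equation~\eqref{generating_series_general} is
\[
1-\frac1{p^2}+\sum_{j=2}^\infty z^{a_j}\biggl(\frac1{p^j}-\frac1{p^{j+1}}\biggr),
\]
so the only task is to evaluate the inner sum for the particular coefficient sequences, using $|z|\le1$ to guarantee absolute convergence (indeed $|z^{a_j}/p^j|\le p^{-j}$).

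For the sequence $S=(0,1,1,\dots)$ we have $a_j=1$ for all $j\ge2$, so the inner sum is
\[
z\sum_{j=2}^\infty\biggl(\frac1{p^j}-\frac1{p^{j+1}}\biggr)=z\cdot\frac1{p^2},
\]
since the sum telescopes; hence the Euler factor becomes $1-\tfrac1{p^2}+\tfrac z{p^2}=1+\tfrac{z-1}{p^2}$, as claimed. For $E=(0,1,0,1,\dots)$ we have $a_j=1$ when $j$ is even and $a_j=0$ when $j$ is odd; writing $j=2i$ for the terms that contribute a factor of~$z$ and $j=2i+1$ for the rest, one splits the inner sum as $z\sum_{i\ge1}(p^{-2i}-p^{-2i-1})+\sum_{i\ge1}(p^{-2i-1}-p^{-2i-2})$, each of which is an explicit geometric series; collecting terms gives $\tfrac{z-1}{p(p+1)}$ for the net change to the Euler factor. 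For $O=(0,0,1,0,1,\dots)$ we have $a_j=1$ when $j\ge3$ is odd and $a_j=0$ otherwise, and the same bookkeeping (now the $z$-terms come from $j=2i+1$ with $i\ge1$) yields $\tfrac{z-1}{p^2(p+1)}$.

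Since these are completely routine evaluations of convergent geometric series, there is no real obstacle; the only point requiring a word of care is the rearrangement of the doubly-indexed sum (over primes~$p$ and exponents~$j$) into a product of per-prime geometric series, which is justified by the absolute convergence noted above together with the convergence of $\prod_p(1-p^{-2})$. Consequently all three identities follow immediately, and by the remark following Theorem~\ref{main_result_2} the products in fact define entire functions of~$z$, so the identities extend to all $z\in\bC$.
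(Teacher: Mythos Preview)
Your proof is correct and follows exactly the approach indicated in the paper: the paper simply states that ``an easy calculation of the right-hand side of equation~\eqref{generating_series_general} in these cases (for which each factor becomes a geometric series)'' yields the three products, and you have carried out precisely that calculation. Your per-prime evaluations of the geometric series for $S$, $E$, and $O$ are all correct, and your closing remark about analytic continuation matches the paper's own remark after Corollary~\ref{main_cor_2}.
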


\begin{corollary} \label{main_cor_4}
For each fixed $k\geqslant2$, we have $e_{{S},m} \leqslant m^{-(2-o(1))m}$ and $e_{{E},m} \leqslant m^{-(2-o(1))m}$ and $e_{{O},m} \leqslant m^{-(3-o(1))m}$ as $m\to\infty$.
\end{corollary}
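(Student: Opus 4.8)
The plan is to mimic the proof of Corollary~\ref{main_cor_2} essentially verbatim, applying Cauchy's coefficient estimate to the three Euler products supplied by Corollary~\ref{main_result_4seo}. Write $G_S(z)=\prod_p\bigl(1+(z-1)/p^2\bigr)$, $G_E(z)=\prod_p\bigl(1+(z-1)/(p(p+1))\bigr)$, and $G_O(z)=\prod_p\bigl(1+(z-1)/(p^2(p+1))\bigr)$. Since $\sum_p |z-1|/p^2$ (and likewise the other two series of reciprocal moduli) converges locally uniformly on $\bC$, each of these products converges locally uniformly and hence defines an entire function whose Taylor coefficients at the origin must be $e_{S,m}$, $e_{E,m}$, $e_{O,m}$ in view of Corollary~\ref{main_result_4seo}. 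Cauchy's inequality therefore applies on a circle of arbitrary radius: $e_{S,m}\le R^{-m}\max_{|z|=R}|G_S(z)|$ for every $R>0$, and similarly for $E$ and $O$. Everything reduces to bounding these maxima and then choosing $R$ well.

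For the modulus bound, fix $|z|=R$, so that $|z-1|\le R+1$ and $|G_S(z)|\le\prod_p\bigl(1+(R+1)/p^2\bigr)$. Taking logarithms and splitting the resulting sum at $p=\sqrt R$, we bound the small-prime part by $\pi(\sqrt R)\cdot O(\log R)\ll\sqrt R$ (using Chebyshev's estimate $\pi(x)\ll x/\log x$) and the large-prime part by $\sum_{p>\sqrt R}(R+1)/p^2\ll\sqrt R$ (using $\log(1+x)\le x$). Hence $|G_S(z)|\le\exp(C\sqrt R)$ for an absolute constant $C$ and all large $R$. Because $1/(p(p+1))\asymp 1/p^2$, the identical computation gives $|G_E(z)|\le\exp(C\sqrt R)$; because $1/(p^2(p+1))\asymp 1/p^3$, the same argument with the split point moved to $p=R^{1/3}$ gives $|G_O(z)|\le\exp(CR^{1/3})$. (This modulus estimate is also what certifies the local uniform convergence, and hence entireness, used in the previous paragraph.)

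Finally, insert these bounds into Cauchy's inequality and optimize over $R$. For $S$ we get $e_{S,m}\le R^{-m}\exp(C\sqrt R)$ for every $R>0$; taking $R$ of order $m^2$ (the value minimizing $-m\log R+C\sqrt R$) makes the exponent $-2m\log m+O(m)$, so $e_{S,m}\le m^{-(2-o(1))m}$, and the same choice of $R$ handles $E$. For $O$ we instead take $R$ of order $m^3$ in $e_{O,m}\le R^{-m}\exp(CR^{1/3})$, which makes the exponent $-3m\log m+O(m)$ and yields $e_{O,m}\le m^{-(3-o(1))m}$. Exactly as for Corollary~\ref{main_cor_2}, a matching lower bound $\log G_S(R)\gg\sqrt R/\log R$ for real $R>1$ (only the primes $p\le\sqrt{R/2}$ are needed, and analogously for the other two) shows that each of these bounds is attained for infinitely many $m$.

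There is no real obstacle here once Corollary~\ref{main_result_4seo} is in hand: the whole argument is a routine adaptation of the proof of Corollary~\ref{main_cor_2}, and the exponents $2$, $2$, $3$ in the three bounds appear precisely because each Euler factor differs from $1$ by a quantity of size $1/p^2$, $1/p^2$, $1/p^3$ respectively. The only points requiring a little care are keeping the absolute constant $C$ under control so that the correction term in the exponent is genuinely $o(m\log m)$, and remembering to note that the modulus estimate doubles as the proof that the generating functions extend to entire functions, which is what licenses applying Cauchy's inequality on circles of unbounded radius.
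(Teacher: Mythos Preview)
Your argument is correct, but it does not actually ``mimic the proof of Corollary~\ref{main_cor_2} essentially verbatim'': the paper's route passes through the classical theory of orders of entire functions (Definition~\ref{order definition} and Lemma~\ref{lemma_for_the_order}, which is Theorem~2.2.2 of Boas), first establishing via Proposition~\ref{general decay prop} that $\log M_Q(r)\asymp r^{1/\kappa}/\log r$ with $\kappa=2,2,3$, then reading off the order~$1/\kappa$, and finally invoking the Boas lemma to convert the order into the coefficient bound. You instead prove the easy direction of that lemma by hand---Cauchy's inequality on a circle of radius $R\asymp m^\kappa$---which is more elementary and self-contained, at the cost of a slightly cruder growth bound $\log M_Q(r)\ll r^{1/\kappa}$ (no $1/\log r$), though this makes no difference to the final $o(1)$. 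The one place where the paper's machinery genuinely buys more is your closing sentence: the lower bound $\log G_S(R)\gg\sqrt{R}/\log R$ does not by itself show that the coefficient bound is attained infinitely often; that implication needs the harder direction of Lemma~\ref{lemma_for_the_order}, which is exactly what the order-theoretic framework supplies. Since the corollary as stated only asks for the upper bound, however, your proof of the corollary itself is complete.
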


\begin{remark}
One interesting class of functions for which our methods accomplish less than desired are functions of the form $\omega_A(n)$ where~$A$ contains integers but not necessarily only nonnegative integers. For example, if $A = (0,1,-1,0,0,\dots)$ then $\omega_A(n) = \omega_2(n)-\omega_3(n)$, while if $A=(0,1,-1,1,-1,\dots)$ then $\omega_A(n) = \omega_E(n) - \omega_O(n)$. The target $m=0$ is natural to investigate, as $\omega_A(n) = 0$ in these two examples translates into $\omega_2(n) = \omega_3(n)$ and $\omega_E(n) = \omega_O(n)$, respectively. While Theorem~\ref{main_result_3} gives a formula for the density of those integers~$n$ satisfying each of these equalities, our numerical techniques in Section~\ref{section_calculations} (which ultimately rely on being able to find the values of the derivatives of the appropriate generating function at $z=0$) are not able to approach the question of good numerical approximations to these densities.
\end{remark}

In Section~\ref{section_for_main_result_3} we establish Theorems~\ref{main_result_3} and~\ref{main_result_4}, the formula and generating function for $e_{A,m}$, from which Theorems~\ref{main_result_1} and~\ref{main_result_2} follow as special cases.
In Section~\ref{section_for_main_results_2_and_4} we deduce Corollaries~\ref{main_cor_2} and~\ref{main_cor_4} (the decay rates of $e_{k,m}$ and certain variants) from Theorem~\ref{main_result_2} and Corollary~\ref{main_result_4seo}.
Finally, in Section~\ref{section_calculations} we describe the computations leading to the numerical values in Tables~\ref{ekm table} and~\ref{E and var table}, as well as establishing Corollaries~\ref{corollary_X} and~\ref{corollary_Y} concerning the expectation and variance of the additive functions under examination.

\section{Closed form and generating function for the densities} \label{section_for_main_result_3}

We first prove Theorem~\ref{main_result_3}, which will also establish the special case that is Theorem~\ref{main_result_1}, by following the exposition of R\'{e}nyi's result~\eqref{renyi} in~\cite[Section 2.4]{MV}.
Recall the notation of equation~\eqref{omega A def}, and recall that~$\cF$ denotes the set of powerful numbers.

\begin{lemma}  \label{f<x error}
Uniformly for all sequences $A=(a_1,a_2,\dots)$ of complex numbers and all $m\in\bC$,
\[
\sum_{\substack{f\leqslant x\\f\in\cF \\\omega_A(f)=m}} \frac{1}{f^{1/2}} \prod_{p\mid f} (1-p^{-1/2} )^{-1} \ll \log x.
\]
\end{lemma}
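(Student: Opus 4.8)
The plan is to bound the sum by forgetting the constraint $\omega_A(f)=m$ entirely, since the bound $\ll \log x$ is uniform in $m$ (and in $A$) anyway. Dropping that constraint can only increase the sum, so it suffices to prove
\[
\sum_{\substack{f\leqslant x\\f\in\cF}} \frac{1}{f^{1/2}} \prod_{p\mid f} \bigl(1-p^{-1/2}\bigr)^{-1} \ll \log x.
\]
The summand is a nonnegative multiplicative function of $f$, supported on powerful numbers, so I would first try to compare the partial sum with the full Dirichlet series. Write $g(f) = f^{-1/2} \prod_{p\mid f}(1-p^{-1/2})^{-1}$ for $f$ powerful and $g(f)=0$ otherwise; then $g$ is multiplicative with, for each prime $p$ and each $j\geqslant 2$, $g(p^j) = p^{-j/2}(1-p^{-1/2})^{-1} = p^{-j/2} + p^{-(j+1)/2} + p^{-(j+2)/2} + \cdots$.

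The cleanest route is probably to observe that $\sum_{f\in\cF} g(f) f^{-\varepsilon}$ has an Euler product that converges for $\varepsilon > 0$ but \emph{diverges} as $\varepsilon\to 0^+$ like a constant times $1/\varepsilon$, because the local factor at $p$ is $1 + g(p^2)p^{-2\varepsilon} + g(p^3)p^{-3\varepsilon} + \cdots = 1 + p^{-1-2\varepsilon}(1+o(1))$, whose logarithm sums over $p$ to $\log\zeta(1+2\varepsilon) + O(1) = \log(1/\varepsilon) + O(1)$. So the Dirichlet series behaves like $C/\varepsilon$ near the line of convergence, which is exactly the analytic signature of a partial-sum bound of size $\log x$. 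I would then convert this into the desired partial-sum estimate either by a Rankin-type (Mellin) argument — for any $\sigma>0$, $\sum_{f\leqslant x} g(f) \leqslant x^{\sigma}\sum_{f} g(f) f^{-\sigma} \ll x^{\sigma}/\sigma$, and optimizing at $\sigma = 1/\log x$ gives $\ll \log x$ — or, if one prefers to avoid even that, by a direct dyadic/elementary argument using that the number of powerful numbers up to $t$ is $O(t^{1/2})$ together with partial summation. The Rankin trick is by far the shortest: it is a two-line deduction once the $1/\sigma$ growth of the Euler product is in hand.

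The only real content, then, is checking that $\prod_p\bigl(1 + p^{-1-2\sigma} + O(p^{-3/2})\bigr) \ll 1/\sigma$ uniformly for $0<\sigma\leqslant 1$, and this is standard: take logarithms, use $\log(1+u)\leqslant u$, separate the $p^{-1-2\sigma}$ terms (which sum to $\log\zeta(1+2\sigma)+O(1)\leqslant \log(1/\sigma)+O(1)$) from the tail terms $O(p^{-1-3\sigma})\ll O(p^{-1-\sigma})$, wait — more carefully the subleading terms at each prime are of size $p^{-3/2-3\sigma}$ and smaller, which sum to $O(1)$ uniformly in $\sigma$ since $\sum_p p^{-3/2} < \infty$. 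I should be slightly careful that I am bounding $g(p^j)$ for $j\geqslant 2$ and not for $j=1$ (powerful numbers have no exponent-$1$ primes), which is what makes the sum over $p$ of the local logarithms only logarithmically divergent rather than like $\log(1/\sigma)$ from a full $\zeta(1+\sigma)$; in fact the dominant term comes from $g(p^2)p^{-2\sigma}=p^{-1-2\sigma}+\cdots$.

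The step I expect to be the main obstacle — really the only place to slip — is getting the \emph{uniformity} right: the implied constant in $\ll\log x$ must not depend on $m$ or on the sequence $A$. This is automatic in my approach precisely because I discard the condition $\omega_A(f)=m$ at the outset, so $A$ and $m$ never enter the estimate at all; one just has to make sure the reader sees that dropping a characteristic-function restriction from a sum of nonnegative terms is legitimate and that everything downstream is a fixed absolute bound. Beyond that, the proof is a routine Rankin's-trick computation, so I would keep it short: state $g$, note multiplicativity and the factorization of the local factor, quote the $\ll 1/\sigma$ bound on $\sum g(f)f^{-\sigma}$ with a one-line Euler-product justification, and finish with $\sigma=1/\log x$.
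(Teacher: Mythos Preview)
Your proposal is correct. Dropping the $\omega_A(f)=m$ condition at the outset is exactly what the paper does too, and for the same reason: it makes the uniformity in $A$ and $m$ automatic. From that point, however, the two arguments diverge.

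The paper replaces the condition $f\leqslant x$ by the weaker condition that every prime factor of $f$ is at most $x$; the resulting sum over all powerful $f$ with prime support in $[2,x]$ has a finite Euler product
\[
\prod_{p\leqslant x}\biggl(1+\frac{1}{(p^{1/2}-1)^2}\biggr)
=\prod_{p\leqslant x}\biggl(1-\frac1p\biggr)^{-1}\prod_{p\leqslant x}\biggl(1+\frac{2}{p(p^{1/2}-1)}\biggr),
\]
and one reads off $\ll\log x$ directly from Mertens plus a convergent tail. Your route instead keeps the condition $f\leqslant x$, introduces a free parameter $\sigma>0$ via Rankin's trick, bounds the full Dirichlet series $\sum_f g(f)f^{-\sigma}\ll 1/\sigma$ by the same Euler-product computation (now with an extra $p^{-j\sigma}$ in each local factor), and then chooses $\sigma=1/\log x$. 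Both are short and standard; the paper's version is marginally more elementary since it invokes only Mertens and avoids any reference to $\zeta(1+2\sigma)$, while your Rankin argument is a touch more flexible and would adapt more readily if the target bound were a power of $\log x$ other than the first. Either is perfectly acceptable here.
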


\begin{proof}
By dropping the condition $\omega_{A}(f)=m$ and noting that $f\leqslant x$ implies that all prime factors of~$f$ are at most~$x$, we have by positivity
\begin{align*}
\sum_{\substack{f\leqslant x \\ f\in\cF \\\omega_A(f)=m}} \frac{1}{f^{1/2}} \prod_{p\mid f} (1-p^{-1/2} )^{-1}&\leqslant \sum_{\substack{f\leqslant x \\ f\in\cF}} \frac{1}{f^{1/2}} \prod_{p\mid f} (1-p^{-1/2} )^{-1}
 \leqslant \sum_{\substack{f\in\cF \\ p\mid f \implies p\leqslant x}} \frac{1}{f^{1/2}} \prod_{p\mid f} (1-p^{-1/2} )^{-1}.
\end{align*}
The right-hand side has an Euler product whose factors involve geometric series with common ratio~$p^{-1/2}$:
\begin{align*}
\sum_{\substack{f\in\cF \\ p\mid f \implies p\leqslant x}} \frac{1}{f^{1/2}} \prod_{p\mid f} (1-p^{-1/2} )^{-1} &= \prod_{p\leqslant x} \biggl(1 + \frac{(1-p^{-1/2} )^{-1}}{(p^2)^{1/2}} + \frac{(1-p^{-1/2} )^{-1}}{(p^3)^{1/2}} + \cdots \biggr)
\\&= \prod_{p\leqslant x} \biggl( 1 + \frac1{(p^{1/2}-1)^2} \biggr)
\\&= \prod_{p\leqslant x} \biggl( 1 - \frac1p \biggr)^{-1} \prod_{p\leqslant x} \biggl( 1 + \frac2{p(p^{1/2}-1)} \biggr);
\end{align*}
this establishes the lemma, since the first product is asymptotic to a multiple of $\log x$ as shown by Mertens, while the second is a convergent product of the form $\prod_p (1+O(p^{-3/2}))$. 
\end{proof}

\begin{lemma}  \label{f>x error}
Uniformly for all sequences $A=(a_1,a_2,\dots)$ of complex numbers and all $m\in\bC$,
\[
\sum_{\substack{f>x \\ f\in\cF \\\omega_A(f)=m}} \frac{1}{f} \prod_{p\mid f} \biggl(1+\frac{1}{p} \biggr)^{-1} \ll x^{-1/2}.
\]
\end{lemma}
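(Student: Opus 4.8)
The plan is to reduce the whole estimate to a tail bound for the reciprocals of powerful numbers. Since every term of the sum is nonnegative, we may simply discard the constraint $\omega_A(f)=m$; this is precisely what renders the bound uniform in the sequence~$A$ and in~$m$, and it reduces the task to showing
\[
\sum_{\substack{f>x\\ f\in\cF}} \frac1f \prod_{p\mid f} \bigl(1+\tfrac1p\bigr)^{-1} \ll x^{-1/2}.
\]
Each local factor satisfies $\bigl(1+\tfrac1p\bigr)^{-1}\le 1$, so it suffices to prove the cleaner estimate $\sum_{f>x,\ f\in\cF} f^{-1} \ll x^{-1/2}$.

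For this I would invoke the classical fact that the counting function of the powerful numbers has order $t^{1/2}$: writing $P(t)=\#\{f\le t:f\in\cF\}$, one has $P(t)\ll t^{1/2}$, most easily seen from the unique representation $f=a^2b^3$ with $b$ squarefree. Partial summation then gives
\[
\sum_{\substack{f>x\\ f\in\cF}} \frac1f = -\frac{P(x)}{x} + \int_x^\infty \frac{P(t)}{t^2}\,dt \ll x^{-1/2},
\]
since $P(x)/x\ll x^{-1/2}$ and $\int_x^\infty P(t)t^{-2}\,dt \ll \int_x^\infty t^{-3/2}\,dt \ll x^{-1/2}$ (and $P(t)/t\to 0$ so the boundary term at infinity vanishes). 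Alternatively one can bound the sum directly via the parametrization $f=a^2b^3$: the contribution of each squarefree~$b$ is $b^{-3}\sum_{a>(x/b^3)^{1/2}}a^{-2}\ll b^{-3/2}x^{-1/2}$, the finitely many~$b$ with $b^3>x$ being handled by an even smaller bound, and summing the convergent series $\sum_b b^{-3/2}$ completes the argument.

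I expect no genuine obstacle here: beyond positivity and the trivial inequality $\bigl(1+\tfrac1p\bigr)^{-1}\le 1$, the only ingredient is the order-$t^{1/2}$ count of powerful numbers, which is elementary. Together with Lemma~\ref{f<x error}, this lemma supplies exactly the two truncation estimates needed to replace the infinite series defining~$e_{A,m}$ by a finite sum over $f\le x$ with an admissible error term, which is the natural starting point for the proof of Theorem~\ref{main_result_3}.
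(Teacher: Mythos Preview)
Your proof is correct and follows essentially the same approach as the paper: drop the constraint $\omega_A(f)=m$ by positivity, bound each factor $(1+1/p)^{-1}$ by~$1$, and then use the $O(t^{1/2})$ count of powerful numbers to control the tail sum. The only cosmetic difference is that the paper converts the counting bound into the tail estimate via a dyadic decomposition over intervals $(2^r x,2^{r+1}x]$, whereas you use partial summation (or the $a^2b^3$ parametrization); these are interchangeable standard devices and yield the same bound.
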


\begin{proof}
Golomb~\cite{Golomb} proved that the number of powerful numbers up to~$y$ is asymptotic to a constant times~$y^{1/2}$. Thus for each integer $r\geqslant 0$,
\[
\sum_{\substack{ 2^rx < f \leqslant 2^{r+1}x \\f\in\cF \\\omega_A(f)=m}} \frac{1}{f} \prod_{p\mid f} \biggl(1+\frac{1}{p} \biggr)^{-1}
< \frac1{2^rx} \sum_{\substack{ 2^rx < f \leqslant 2^{r+1}x \\f\in\cF}} 1 \ll \frac1{2^rx} (2^{r+1}x)^{1/2} \ll 2^{-r/2}x^{-1/2},
\]
and consequently
\[
\sum_{\substack{f>x\\f\in\cF \\\omega_A(f)=m}} \frac{1}{f} \prod_{p\mid f} \biggl(1+\frac{1}{p} \biggr)^{-1} = \sum_{r=0}^\infty \sum_{\substack{ 2^rx < f \leqslant 2^{r+1}x \\f\in\cF \\\omega_A(f)=m}} \frac{1}{f} \prod_{p\mid f} \biggl(1+\frac{1}{p} \biggr)^{-1} \ll \sum_{r=0}^\infty 2^{-r/2}x^{-1/2} \ll x^{-1/2}. \qedhere
\]
\end{proof}

\begin{proof}[Proof of Theorem~\ref{main_result_3}]
Fix a sequence $A=(0,a_2,a_3,\ldots)$ of complex numbers and a target $m\in\bC$.
Every positive integer $n$ can be written uniquely as $n=qf$ where~$q$ is squarefree,~$f$ is powerful, and $(q,f)=1$ (indeed,~$q$ is the product of the primes dividing~$n$ exactly once). In this notation, the condition $\omega_{A}(n)=m$ is equivalent to $\omega_{A}(f)=m$ (since $a_1=0$), and thus
\begin{align} \label{decomposition}
\#\cN_{A,m}(x)=\sum_{\substack{f\leqslant x\\f\in\cF \\\omega_A(f)=m}} \sum_{\substack{q\leqslant x/f \\(q,f)=1}} \mu^2(q).
\end{align}
To estimate the inner sum above, we use~\cite[Lemma 2.17]{MV} which says that for any $y\geqslant 1$ and any positive integer~$f$,
\[
\sum_{\substack{n\leqslant y\\(n,f)=1}} \mu^2(n)=\frac{6}{\pi^2}y \prod_{p\mid f} \biggl(1+\frac{1}{p} \biggr)^{-1} + O\biggl(y^{1/2} \prod_{p\mid f} (1-p^{-1/2})^{-1} \biggr).
\]
Inserting this asymptotic formula into equation~\eqref{decomposition} yields
\begin{align*}
\#\cN_{A,m}(x) &= \frac{6}{\pi^2}x\sum_{\substack{f\leqslant x\\f\in\cF \\\omega_A(f)=m}} \frac{1}{f} \prod_{p\mid f} \biggl(1+\frac{1}{p} \biggr)^{-1} + O\biggl(x^{1/2} \sum_{\substack{f\leqslant x\\f\in\cF \\\omega_A(f)=m}} \frac{1}{f^{1/2}} \prod_{p\mid f}(1-p^{-1/2})^{-1} \biggr) \\
&= \frac{6}{\pi^2}x \biggl( \sum_{\substack{f\in\cF \\\omega_A(f)=m}} \frac{1}{f} \prod_{p\mid f} \biggl(1+\frac{1}{p} \biggr)^{-1} - \sum_{\substack{f>x\\f\in\cF \\\omega_A(f)=m}} \frac{1}{f} \prod_{p\mid f} \biggl(1+\frac{1}{p} \biggr)^{-1} \biggr) + O(x^{1/2}\log x) \\
&= \frac{6}{\pi^2}x \biggl( \sum_{\substack{f\in\cF \\\omega_A(f)=m}} \frac{1}{f} \prod_{p\mid f} \biggl(1+\frac{1}{p} \biggr)^{-1} + O(x^{-1/2}) \biggr) + O(x^{1/2}\log x)
\end{align*}
by Lemmas~\ref{f<x error} and~\ref{f>x error}, which completes the proof of the theorem.
\end{proof}

With Theorem~\ref{main_result_3} now established, it is a simple matter to prove Theorem~\ref{main_result_4}, which will also establish the special case that is Theorem~\ref{main_result_2}.

\begin{proof}[Proof of Theorem~\ref{main_result_4}]
Fix a sequence $A=(0,a_2,a_3,\ldots)$ of nonnegative integers. Note that $\sum_{m=0}^{\infty}e_{A,m} = 1$ (by the argument in Remark~\ref{abs conv remark}), and therefore $\sum_{m=0}^{\infty}e_{A,m}z^{m}$ converges absolutely for any complex number~$z$ with $|z|\leqslant1$. By Theorem~\ref{main_result_3},
\begin{align*}
\sum_{m=0}^{\infty}e_{A,m}z^{m}&=\frac{6}{\pi^2} \sum_{m=0}^{\infty} z^m \sum_{\substack{f\in\cF \\\omega_A(f)=m}} \frac1{f} \prod_{p\mid f} \biggl(1+\frac{1}{p} \biggr)^{-1} = \frac{6}{\pi^2} \sum_{f\in\cF} \frac{z^{\omega_A(f)}}{f} \prod_{p\mid f} \biggl(1+\frac{1}{p} \biggr)^{-1}.
\end{align*}
Since
${z^{\omega_A(f)}}/{f} = \prod_{p^j\| f} {z^{\omega_A(p^j)}}/{p^j} = \prod_{p^j\| f} {z^{a_j}}/{p^j}$,
the right-hand side equals its Euler product
\begin{align*}
\frac{6}{\pi^2} \sum_{f\in\cF} \frac{z^{\omega_A(f)}}{f} \prod_{p\mid f} \biggl(1+\frac{1}{p} \biggr)^{-1} &= \frac{6}{\pi^2} \prod_p \biggl(1+\biggl(1+\frac{1}{p} \biggr)^{-1} \sum_{j= 2}^{\infty} \frac{z^{a_j}}{p^{j}} \biggr) \\
&= \prod_p \biggl(1-\frac{1}{p^2} \biggr) \biggl(1+\biggl(1+\frac{1}{p} \biggr)^{-1} \sum_{j= 2}^{\infty} \frac{z^{a_j}}{p^{j}} \biggr) \\
&=\prod_{p} \biggl(1-\frac{1}{p^2} + \biggl(1-\frac{1}{p} \biggr) \sum_{j= 2}^{\infty} \frac{z^{a_j}}{p^{j}} \biggr),
\end{align*}
which is equal to the right-hand side of equation~\eqref{generating_series_general}, thus establishing the theorem.
\end{proof}

\section{Decay rates of the densities} \label{section_for_main_results_2_and_4}

In this section, we deduce Corollary~\ref{main_cor_2} from Theorem~\ref{main_result_2} and Corollary~\ref{main_cor_4} from Theorem~\ref{main_result_4}. The key step is to give a proposition establishing the rate of growth of infinite products such as those appearing in Theorems~\ref{main_result_2} and~\ref{main_result_4}, which we do after the following simple lemma  for the prime-counting function $\pi(y)$ and its logarithmically weighted version $\theta(y)$.

\begin{lemma} \label{pi theta lemma}
$\pi(y)\log y - \theta(y) \sim y/\log y$ as $y\to\infty$.
\end{lemma}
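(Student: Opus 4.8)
The plan is to use the classical estimates for the two prime-counting functions and subtract them carefully, keeping track of the main terms. Recall that the Prime Number Theorem gives $\pi(y) \sim y/\log y$ and $\theta(y) \sim y$; but these crude asymptotics are not enough here, since we are subtracting two quantities that are each asymptotic to~$y$ (after multiplying $\pi(y)$ by $\log y$), so the difference could a priori be anything of size $o(y)$. Instead I would start from the exact relation between $\pi$ and $\theta$ obtained by partial summation. Writing $\theta(y) = \sum_{p\leqslant y} \log p$, Abel summation against the function $1/\log t$ gives
\[
\pi(y) = \frac{\theta(y)}{\log y} + \int_2^y \frac{\theta(t)}{t (\log t)^2}\,dt,
\]
so that
\[
\pi(y)\log y - \theta(y) = \log y \int_2^y \frac{\theta(t)}{t(\log t)^2}\,dt.
\]

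Next I would insert the estimate $\theta(t) = t + O\bigl(t/(\log t)^2\bigr)$ (which follows from the Prime Number Theorem with de la Vall\'ee Poussin's error term, or indeed any error term that is $o(t/\log t)$ would suffice for the stated asymptotic, though one has to be slightly careful). Then the integral splits as
\[
\int_2^y \frac{dt}{(\log t)^2} + O\biggl( \int_2^y \frac{dt}{(\log t)^4} \biggr).
\]
The first integral is a standard one: integrating by parts (or comparing with the logarithmic integral) shows $\int_2^y dt/(\log t)^2 \sim y/(\log y)^2$, and the error integral is $O\bigl(y/(\log y)^4\bigr) = o\bigl(y/(\log y)^2\bigr)$. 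Multiplying through by $\log y$ then yields $\pi(y)\log y - \theta(y) \sim y/\log y$, as claimed.

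The only mildly delicate point—and the step I would flag as the main obstacle—is justifying the asymptotic evaluation $\int_2^y dt/(\log t)^2 \sim y/(\log y)^2$ cleanly, together with making sure the error term from the Prime Number Theorem is genuinely of smaller order after the integration and the final multiplication by $\log y$. In fact one can avoid invoking a strong error term at all: writing $\theta(t) = t + E(t)$ with $E(t) = o(t)$ (the bare Prime Number Theorem), the contribution of $E(t)$ to $\log y \int_2^y \theta(t)/(t(\log t)^2)\,dt$ is $\log y \int_2^y o(1)/(\log t)^2\,dt = o\bigl(y/\log y\bigr)$ by a routine splitting of the range of integration at $\sqrt y$, so even this weak input is sufficient. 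I would present the argument in this last form to keep it self-contained.
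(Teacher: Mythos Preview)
Your argument is correct and takes a genuinely different route from the paper. The paper proceeds by substituting the prime number theorem with de la Vall\'ee Poussin error term in the forms $\pi(y)=\operatorname{li}(y)+O(ye^{-c\sqrt{\log y}})$ and $\theta(y)=y+O(ye^{-c\sqrt{\log y}})$, then expanding $\operatorname{li}(y)=y/\log y + y/\log^2 y + O(y/\log^3 y)$ and subtracting. Your approach instead starts from the exact Abel-summation identity $\pi(y)\log y-\theta(y)=\log y\int_2^y \theta(t)\,dt/(t\log^2 t)$ and only then inserts the prime number theorem. The advantage of your route is that, as you observe at the end, the bare estimate $\theta(t)=t+o(t)$ already suffices, whereas the paper's substitution requires at least $\pi(y)-\operatorname{li}(y)=o(y/\log^2 y)$ to survive the multiplication by $\log y$; so your argument is strictly more economical in its hypotheses. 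The paper's version, on the other hand, is shorter to write and yields an explicit $O(y/\log^2 y)$ remainder rather than just an $o(y/\log y)$. Both are perfectly adequate for the application in Proposition~\ref{general decay prop}, which only needs the $\asymp$ conclusion.
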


\begin{proof}
\newcommand{\li}{\mathop{\rm li}}
By the prime number theorem,
\begin{align*}
\pi(y)\log y - \theta(y) &= \bigl( \li(y) + O(ye^{-c\sqrt{\log y}}) \bigr) \log y - \bigl( y + O(ye^{-c\sqrt{\log y}}) \bigr) \\
&= \biggl( \Big( \frac y{\log y} + \frac y{\log^2 y} + O\Big( \frac y{\log^3 y} \Big) \Big) + O\Big( \frac y{\log^3 y} \Big) \biggr) \log y - \biggl( y + O\Big( \frac y{\log^2 y} \Big) \biggr) \\
&= \frac y{\log y} + O\Big( \frac y{\log^2 y} \Big). \qedhere
\end{align*}
\end{proof}

\begin{proposition} \label{general decay prop}
Fix a real number~$\kappa>1$, and let $R(p)$ be a positive function defined on primes~$p$ such that $R(p) \sim p^{-\kappa}$ as $p\to\infty$. Define the function $P(x) = \prod_p \bigl(1 + R(p)x \bigr)$. Then $\log P(x) \asymp x^{1/\kappa}/\log x$ as $x\to\infty$.
\end{proposition}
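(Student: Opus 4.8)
The plan is to estimate $\log P(x) = \sum_p \log(1+R(p)x)$ by splitting the sum at a suitably chosen threshold $T = T(x)$ and treating the two ranges separately. For primes $p$ with $R(p)x$ large (the range $p \lesssim T$), we have $\log(1+R(p)x) = \log(R(p)x) + O(1/(R(p)x))$, which behaves like $\log x - \kappa \log p$ plus lower-order terms; for primes $p$ with $R(p)x$ small (the range $p \gtrsim T$), we have $\log(1+R(p)x) \asymp R(p)x \asymp x/p^\kappa$, so the tail contributes $\ll x \sum_{p > T} p^{-\kappa} \ll x T^{1-\kappa}/\log T$ by partial summation (using $\kappa > 1$). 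The natural choice is $T \asymp x^{1/\kappa}$, the point where $R(p)x \asymp 1$; with this choice the tail is $\ll x \cdot x^{(1-\kappa)/\kappa}/\log x = x^{1/\kappa}/\log x$, which matches the claimed order.

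For the main range $p \leqslant T$ with $T \asymp x^{1/\kappa}$, I would write
\[
\sum_{p \leqslant T} \log(1+R(p)x) = \sum_{p \leqslant T} \bigl( \log x + \log R(p) \bigr) + O\biggl( \sum_{p \leqslant T} \frac{1}{R(p)x} + \#\{p : R(p)x \leqslant 2\} \biggr),
\]
being slightly careful near $p = T$ where $R(p)x$ is of bounded size (there the error is $O(1)$ per prime, contributing $O(\pi(T)) = O(x^{1/\kappa}/\log x)$, which is acceptable). Since $R(p) \sim p^{-\kappa}$, the main term is $\pi(T)\log x - \kappa\,\theta(T) + o(\pi(T)\log T)$. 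Now $\log x \asymp \kappa \log T$, so $\pi(T)\log x$ and $\kappa\,\theta(T)$ are both of order $T\log T/\log T = T$ — they are the same size — and this is exactly where Lemma~\ref{pi theta lemma} enters: it gives $\pi(T)\log T - \theta(T) \sim T/\log T$, so the leading terms do \emph{not} cancel but leave a residual of order $T/\log T \asymp x^{1/\kappa}/\log x$. The error term $\sum_{p \leqslant T} 1/(R(p)x) \ll x^{-1}\sum_{p\leqslant T} p^\kappa \ll x^{-1} T^{\kappa+1}/\log T \asymp x^{1/\kappa}/\log x$ is also of the right order, so everything is consistent.

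Combining the two ranges gives $\log P(x) \asymp x^{1/\kappa}/\log x$, with upper and lower implied constants depending only on $\kappa$ and on the implied constants in $R(p) \sim p^{-\kappa}$. The main obstacle is the bookkeeping in the main range: because $\pi(T)\log x$ and $\kappa\theta(T)$ are genuinely of the same order of magnitude, one cannot afford to be sloppy with the secondary terms in the prime number theorem, and Lemma~\ref{pi theta lemma} must be invoked with enough precision (its error term $O(y/\log^2 y)$ is smaller than the main term $y/\log y$, which suffices). A secondary nuisance is the transition region $p \approx T$ where $R(p)x$ is neither large nor small; handling it by simply noting $\log(1+R(p)x) = O(1)$ there and absorbing $O(\pi(T))$ into the final bound is the cleanest route. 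One should also confirm that the asymptotic $R(p)\sim p^{-\kappa}$ being only asymptotic (rather than exact) does not cause problems: for $p$ below any fixed bound the factors $1+R(p)x$ contribute $O(\log x)$ total, which is negligible, and for large $p$ the comparison $R(p) \asymp p^{-\kappa}$ is all that was used.
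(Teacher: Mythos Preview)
Your proposal is correct and follows essentially the same route as the paper: split at $T=x^{1/\kappa}$, handle the tail $p>T$ via $\log(1+u)\leqslant u$ and partial summation, and reduce the main range to the quantity $\pi(T)\log T-\theta(T)$ evaluated through Lemma~\ref{pi theta lemma}. The only notable difference is cosmetic bookkeeping in the main range: the paper sandwiches $\log(1+R(p)x)$ directly between $\log(\tfrac12 p^{-\kappa}x)$ and $\log(3p^{-\kappa}x)$, so both the upper and lower estimates become exact $\pi\log-\theta$ computations with explicit positive constants, whereas your expansion $\log(1+u)=\log u+O(1/u)$ introduces an error term $\sum_{p\leqslant T}1/(R(p)x)$ that is genuinely of the same order $x^{1/\kappa}/\log x$ as the main term. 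That does not actually threaten the lower bound (the correction $\log(1+1/(R(p)x))$ is nonnegative, so it can simply be dropped wherever $R(p)x\geqslant1$), but you should say this explicitly rather than leaving it at ``everything is consistent''; the paper's sandwich avoids the issue altogether.
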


\begin{proof}
All implicit constants in this proof may depend on~$R(p)$ and~$\kappa$.
Choose $p_0$ so that $\frac12p^{-\kappa} < R(p) < 2p^{-\kappa}$ for all $p>p_0$. We write
\begin{align}
\log P(x) &= \sum_{p\leqslant p_0} \log \bigl(1 + R(p)x \bigr) + \sum_{p_0<p\leqslant x^{1/\kappa}} \log \bigl(1 + R(p)x \bigr) + \sum_{p>x^{1/\kappa}} \log \bigl(1 + R(p)x \bigr) \notag \\
&= O(\log x) + \sum_{p_0<p\leqslant x^{1/\kappa}} \log \bigl(1 + R(p)x \bigr) + \sum_{p>x^{1/\kappa}} \log \bigl(1 + R(p)x \bigr),
\label{three sums}
\end{align}
since the number of terms in the first sum, and the largest value of $R(p)$ appearing in that sum, are both bounded in terms of the function~$R$.

In the first sum in equation~\eqref{three sums},
\[
\tfrac12p^{-\kappa}x < R(p)x < 1 + R(p)x < (x^{1/\kappa}/p)^\kappa + 2p^{-\kappa}x = 3p^{-\kappa}x.
\]
Therefore
\[
\sum_{p_0<p\leqslant x^{1/\kappa}} \log (\tfrac12 p^{-\kappa}x) \leqslant \sum_{p_0<p\leqslant x^{1/\kappa}} \log \bigl(1 + R(p)x \bigr) \leqslant \sum_{p_0<p\leqslant x^{1/\kappa}} \log (3p^{-\kappa}x).
\]
The right-hand inequality is the same as
\begin{align*}
\sum_{p_0<p\leqslant x^{1/\kappa}} \log \bigl(1 + R(p)x \bigr) &\leqslant ( \log x + \log 3 ) \bigl( \pi(x^{1/k}) - \pi(p_0) \bigr) - \kappa \bigl( \theta(x^{1/k}) - \theta(p_0) \bigr) \\
&= \kappa \bigl( \pi(x^{1/\kappa})\log(x^{1/\kappa}) - \theta(x^{1/\kappa}) \bigr) + \pi(x^{1/\kappa}) \log 3 + O(\log x) \\
&\sim (\kappa+\log3) \frac{x^{1/\kappa}}{\log(x^{1/\kappa})} = \kappa(\kappa+\log3) \frac{{x^{1/\kappa}}}{\log x}
\end{align*}
by Lemma~\ref{pi theta lemma}. By the same calculation with $\log\frac12$ in place of $\log3$,
\begin{align*}
\sum_{p_0<p\leqslant x^{1/\kappa}} \log \bigl(1 + R(p)x \bigr) &\geqslant\kappa(\kappa-\log2) \frac{{x^{1/\kappa}}}{\log x}
\end{align*}
(note that $\kappa-\log2>1-\log2$ is bounded away from~$0$).
We conclude that
\begin{equation} \label{mid conclusion}
\sum_{p_0<p\leqslant x^{1/\kappa}} \log \bigl(1 + R(p)x \bigr) \asymp \frac{{x^{1/\kappa}}}{\log x}.
\end{equation}

In the second sum in equation~\eqref{three sums},
\[
0 \leqslant \log(1+R(p)x) \leqslant R(p)x < 2p^{-\kappa} x,
\]
and thus by partial summation,
\begin{align*}
0 \leqslant \sum_{p>x^{1/\kappa}} \log \bigl(1 + R(p)x \bigr) &\leqslant \sum_{p>x^{1/\kappa}} 2p^{-\kappa} x \\
&= 2x \int_{x^{1/\kappa}}^\infty t^{-\kappa} \, d\pi(t) \\
&= 2x \biggl( \pi(t)t^{-\kappa}\bigg|_{x^{1/\kappa}}^\infty + \int_{x^{1/\kappa}}^\infty \kappa t^{-\kappa-1}\pi(t)\,dt \biggr).
\end{align*}
The boundary term is well defined (since $\kappa>1$) and negative, and thus by the prime number theorem,
\begin{align*}
0 \leqslant \sum_{p>x^{1/\kappa}} \log \bigl(1 + R(p)x \bigr) &\ll x \biggl( 0 + \int_{x^{1/\kappa}}^\infty t^{-\kappa-1} \frac{t}{\log t} \,dt \biggr) \\
&\ll \frac{x}{\log x} \int_{x^{1/\kappa}}^\infty t^{-\kappa} \,dt = \frac{x}{\log x} \frac{(x^{1/\kappa})^{1-\kappa}}{\kappa-1} \ll \frac{x^{1/\kappa}}{\log x}.
\end{align*}
The proposition now follows by combining these inequalities with equations~\eqref{three sums} and~\eqref{mid conclusion}.
\end{proof}

All that is left is to connect the rates of growth of the generating functions in Theorems~\ref{main_result_2} and~\ref{main_result_4} to the decay rate of their Maclaurin coefficients.
We use the following classical information about entire functions \cite[Definition~2.1.1 and Theorem~2.2.2]{Boas}:

\begin{definition} \label{order definition}
An entire function $f(z)$ is said to be {\em of order~$\rho$} if
\begin{align*}
\limsup_{r\to \infty} \frac{\log\log M_f(r)}{\log r}=\rho
\end{align*}
where $M_f(r)=\max_{|z|=r} |f(z)|$. It is {\em of finite order} if it is of order~$\rho$ for some $\rho\in\bR$.
\end{definition}

\begin{lemma} \label{lemma_for_the_order}
Let $f(z) = \sum_{m=0}^{\infty} b_mz^m$ be an entire function.
The function $f(z)$ is of finite order if and only if
\begin{align*}
\mu=\limsup_{\substack{m\to \infty \\ b_m\ne0}} \frac{m\log m}{\log(1/|b_m|)}
\end{align*}
is finite, and in this case $f(z)$ is of order~$\mu$.
\end{lemma}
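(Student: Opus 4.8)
The plan is to prove the classical equivalence between an entire function being of finite order and the quantity $\mu$ defined from its Maclaurin coefficients being finite, together with the identification of the order with $\mu$. This is a standard fact (essentially the Cauchy-estimate and the maximum-modulus comparison running in opposite directions), so the task is mainly to record the two-sided argument cleanly. I would state at the outset that without loss of generality infinitely many $b_m$ are nonzero (otherwise $f$ is a polynomial, hence of order~$0$, and the $\limsup$ defining~$\mu$ is over the empty set or is~$0$, so the claim holds trivially).

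First I would prove that the order $\rho$ of $f$ satisfies $\rho \le \mu$ when $\mu < \infty$. The idea is: if $m\log m / \log(1/|b_m|) \le \mu + \varepsilon$ for all large $m$ with $b_m \ne 0$, then $|b_m| \le m^{-m/(\mu+\varepsilon)}$ for all large~$m$, and one estimates
\[
M_f(r) \le \sum_{m=0}^\infty |b_m| r^m \le O(1) + \sum_{m \ge m_0} m^{-m/(\mu+\varepsilon)} r^m.
\]
The terms of this series are maximized (as a function of the continuous variable~$m$) near $m \asymp r^{1/(\mu+\varepsilon)}$ up to logarithmic factors; a routine computation (splitting the sum at, say, $m = (2r)^{1/(\mu+\varepsilon)}$ and bounding the tail by a geometric series) gives $\log M_f(r) \ll r^{1/(\mu+\varepsilon)+o(1)}$, hence $\rho \le \mu+\varepsilon$. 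Letting $\varepsilon\to0$ gives $\rho\le\mu$. In particular, if $\mu<\infty$ then $f$ is of finite order.

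Next I would prove the reverse inequality $\mu \le \rho$ when $f$ is of finite order. Here the tool is Cauchy's estimate $|b_m| \le M_f(r)/r^m$ for every $r>0$. If $f$ is of order~$\rho$, then for any $\varepsilon>0$ we have $M_f(r) \le \exp(r^{\rho+\varepsilon})$ for all large~$r$, so $|b_m| \le \exp(r^{\rho+\varepsilon})/r^m = \exp(r^{\rho+\varepsilon} - m\log r)$. Choosing $r$ to (nearly) minimize the exponent — the optimal choice is $r^{\rho+\varepsilon} \asymp m/(\rho+\varepsilon)$, i.e. $r \asymp (m/(\rho+\varepsilon))^{1/(\rho+\varepsilon)}$ — yields $\log(1/|b_m|) \gg m\log m /(\rho+\varepsilon)$ up to lower-order terms, and hence $m\log m / \log(1/|b_m|) \le \rho + 2\varepsilon$ for all large~$m$ with $b_m\ne0$. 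Therefore $\mu \le \rho+2\varepsilon$, and letting $\varepsilon\to0$ gives $\mu\le\rho$. Combining the two inequalities gives $\mu=\rho$, which proves the lemma; since this is cited from Boas, I would alternatively just invoke \cite[Theorem~2.2.2]{Boas} directly and omit the computation.

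The only mildly delicate point — and the part I would be most careful about — is the optimization over~$m$ (respectively over~$r$) in each direction, making sure the logarithmic factors are handled so that the final exponent is exactly $1/\mu$ (respectively~$\mu$) rather than something off by a constant in the exponent; but since we only need $\limsup$ statements and may absorb powers of $\log$ and constants freely, this causes no real trouble. In the interest of brevity, the cleanest route in the paper itself is simply to cite \cite[Definition~2.1.1 and Theorem~2.2.2]{Boas}, as the excerpt already does, since the result is entirely classical.
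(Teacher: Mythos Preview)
Your proposal is correct, and in fact it goes well beyond what the paper does: the paper provides no proof of this lemma at all, simply stating it as a citation to \cite[Theorem~2.2.2]{Boas}. Your two-sided argument (Cauchy's estimate for $\mu\le\rho$, direct majorization of the power series for $\rho\le\mu$) is exactly the standard proof found in Boas, and you correctly anticipate in your final paragraph that the paper's own ``proof'' is just the reference.
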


\begin{proof}[Proof of Corollary~\ref{main_cor_2}]
Set
\begin{align*}
P(x)=\prod_{p} \biggl(1+\frac{p-1}{p^{k+1}} x \biggr) \quad\text{and}\quad Q(z) = \sum_{m=0}^{\infty}e_{k,m}z^{m} = \prod_{p} \biggl(1+\frac{p-1}{p^{k+1}} (z-1) \biggr).
\end{align*}
When $|z|=r$, note that
\[
|Q(z)| \leqslant \prod_{p} \biggl(1+\frac{p-1}{p^{k+1}} (|z|+1) \biggr) = P\big( r+1 \bigr);
\]
thus by Proposition~\ref{general decay prop} with $\kappa=k$ and $R(p) = (p-1)/p^{k+1}$,
\[
\log |Q(z)| \ll \frac{(r+1)^{1/k}}{\log(r+1)} \ll \frac{r^{1/k}}{\log r}.
\]
On the other hand, when $z=r>3$ is real, then
\[
\log |Q(r)| = \log P(r-1) \gg \frac{(r-1)^{1/k}}{\log(r-1)} \gg \frac{r^{1/k}}{\log r}
\]
again by Proposition~\ref{general decay prop}. Together these last estimates show that
$\log M_Q(r) \asymp {r^{1/k}}/{\log r}$, which implies that
\[
\limsup_{r\to \infty} \frac{\log\log M_Q(r)}{\log r} = \limsup_{r\to \infty} \frac{\log(r^{1/k}) - \log\log r + O(1)}{\log r} = \frac1k.
\]
In particular, $Q(z)$ has order $\frac1k$ by Definition~\ref{order definition}; consequently, by Lemma~\ref{lemma_for_the_order},
\begin{align*}
\limsup_{\substack{m\to \infty \\ e_{k,m}\ne0}} \frac{m\log m}{\log(1/|e_{k,m}|)}=\frac{1}{k}.
\end{align*}
We know that $e_{k,m}>0$ by Theorem~\ref{main_result_1}, and so
\begin{align*}
\frac{m\log m}{\log(1/e_{k,m})} \leqslant \frac{1}{k}+o(1)
\end{align*}
with asymptotic equality for infinitely many~$m$; we conclude that
\begin{align*}
e_{k,m} \leqslant m^{-(k-o(1))m}
\end{align*}
which completes the proof of the corollary.
\end{proof}

\begin{proof}[Proof of Corollary~\ref{main_result_4seo}]
The proof is the same as the proof of Corollary~\ref{main_cor_2}, except that $Q(z)$ is changed to each of the three products
\begin{align*}
\sum_{m=0}^{\infty}e_{{S},m}z^{m}&=\prod_{p} \biggl(1+\frac{z-1}{p^{2}} \biggr)
\\\sum_{m=0}^{\infty}e_{{E},m}z^{m}&=\prod_{p} \biggl(1+\frac{z-1}{p(p+1)} \biggr)
\\\sum_{m=0}^{\infty}e_{{O},m}z^{m}&=\prod_{p} \biggl(1+\frac{z-1}{p^{2}(p+1)} \biggr)
\end{align*}
in turn, with corresponding modifications to $P(x)$ and $R(p)$;  instead of with $\kappa=k$, the appeal to Proposition~\ref{general decay prop} is made with $\kappa=2$ in the first two cases and $\kappa=3$ in the last case, and the rest of the proof goes through in exactly the same way.
\end{proof}

\section{Numerical calculations of densities, expectations, and variances} \label{section_calculations}

We now describe how we used the generating functions in Theorem~\ref{main_result_2} to facilitate the calculation of the densities $e_{k,m}$ in Table~\ref{ekm table} to the indicated high level of precision. Our approach is based on observations of Marcus Lai (private communication).

\begin{proposition} \label{recursive general}
Let $P(z)$ be any function with Maclaurin series
\[
P(z) = \sum_{n=0}^\infty C(n) z^n,
\]
so that $C(n) = \frac1{n!} P^{(n)}(0)$ for every $n\ge0$. Define
\[
S(0,z) = \log P(z) \quad\text{and}\quad S(n,z) = \frac{d^n}{dz^n} S(0,z)
\]
for all $n\ge1$; and define $S(n) = S(n,0)$ and $\tilde S(n) = \frac1{(n-1)!} S(n)$. Then for any $n\ge1$,
\begin{align*}
P^{(n)}(z) &= \sum_{k=0}^{n-1} \binom{n-1}k P^{(k)}(z) S(n-k,z).
\end{align*}
In particular, for $n\ge1$,
\begin{equation} \label{1}
C(n) = \frac1n \sum_{k=0}^{n-1} C(k) \tilde S(n-k),
\end{equation}
so that for example
\begin{align*}
C(1) &=C(0)\tilde{S}(1)= P(0) \tilde S(1), \\
C(2) &=\frac{1}{2} \bigl(C(0)\tilde{S}(2)+C(1)\tilde{S}(1)\bigr) =\frac{P(0)}{2} \bigl( \tilde S(1)^2 + \tilde S(2) \bigr), \\
C(3) &=\frac{1}{3} \bigl(C(0)\tilde{S}(3)+C(1)\tilde{S}(2)+C(2)\tilde{S}(1)\bigr) = \frac{P(0)}{6} \bigl( {\tilde S(1)^3} + 3 \tilde S(1) \tilde S(2) + 2 \tilde S(3) \bigr).
\end{align*}
\end{proposition}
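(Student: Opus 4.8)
The plan is to prove the closed-form identity for $P^{(n)}(z)$ by induction on~$n$, then specialize to $z=0$ to extract the recursion for the Maclaurin coefficients, and finally verify the three displayed examples by direct substitution. Throughout I will use the standing assumption implicit in the statement that $P(0)=C(0)\ne0$, so that $S(0,z)=\log P(z)$ — and hence all of its derivatives $S(n,z)$ — are defined in a neighbourhood of the origin.

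The base case $n=1$ is the logarithmic derivative identity: from $P(z)=\exp\bigl(S(0,z)\bigr)$ one differentiates once to get $P'(z)=S'(0,z)\,P(z)=S(1,z)\,P(z)$, which is exactly the claimed formula when $n=1$. For the inductive step I would differentiate
\[
P^{(n)}(z)=\sum_{k=0}^{n-1}\binom{n-1}{k}P^{(k)}(z)\,S(n-k,z)
\]
using the product rule, producing two families of terms: one involving $P^{(k+1)}(z)\,S(n-k,z)$ and one involving $P^{(k)}(z)\,S(n-k+1,z)$. Reindexing the first family by $j=k+1$ and then collecting the coefficient of each product $P^{(j)}(z)\,S(n+1-j,z)$, the two contributions combine for $1\leqslant j\leqslant n-1$ via Pascal's rule $\binom{n-1}{j-1}+\binom{n-1}{j}=\binom{n}{j}$, while the boundary indices $j=0$ and $j=n$ each receive a single term with binomial coefficient~$1=\binom{n}{0}=\binom{n}{n}$. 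This gives precisely the asserted formula with $n$ replaced by $n+1$.

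To obtain equation~\eqref{1}, I would set $z=0$ in the identity and use $P^{(n)}(0)=n!\,C(n)$, $P^{(k)}(0)=k!\,C(k)$, and $S(n-k,0)=S(n-k)$; the product $\binom{n-1}{k}\,k!$ collapses to $(n-1)!/(n-1-k)!$, and after dividing through by $n!$ the surviving factor $1/(n-1-k)!$ pairs with $S(n-k)$ to form $\tilde S(n-k)$, leaving $C(n)=\frac1n\sum_{k=0}^{n-1}C(k)\tilde S(n-k)$. The sample formulas for $C(1)$, $C(2)$, $C(3)$ then follow by unwinding this recursion and substituting $C(0)=P(0)$ together with the already-computed lower-order expressions.

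I do not anticipate a genuine obstacle here; the argument is essentially the classical relation between the Maclaurin coefficients of $P$ and those of $\log P$. The only point requiring care is the bookkeeping in the inductive step — tracking which index shift generates which binomial coefficient and checking that the two boundary terms $j=0$ and $j=n$ come out correctly — which is routine once the Pascal recombination is set up cleanly.
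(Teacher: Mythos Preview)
Your proposal is correct and follows essentially the same route as the paper: the paper verifies $P'(z)=P(z)S(1,z)$ and then invokes the Leibniz product rule applied $n-1$ times to obtain the binomial identity, whereas you unfold that Leibniz step as an explicit induction with Pascal's rule --- the same argument written out in slightly more detail. The passage to equation~\eqref{1} by evaluating at $z=0$ and collapsing the factorials is likewise the same in both.
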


\begin{proof}
We first verify that
\[
P'(z) = P(z)\frac{P'(z)}{P(z)} = P(z) \frac d{dz} \log P(z) = P(z) \frac d{dz} S(0,z) = P(z) S(1,z),
\]
which is the case $n=1$ of the first identity. The general case of the first identity now follows from using the product rule $n-1$ times in a row on this initial identity $P'(z) = P(z) S(1,z)$. The second identity follows by plugging in $z=0$ into the first identity and recalling that $C(n) = \frac1{n!} P^{(n)}(0)$.
\end{proof}

We apply this recursive formula (with subscripts inserted throughout the notation for clarity) with $C(m) = e_{k,m}$, so that
\begin{equation} \label{genfn}
P_k(z) = \sum_{m=0}^\infty e_{k,m} z^m = \prod_p \biggl( 1 + \frac{(p-1)(z-1)}{p^{k+1}} \biggr)
\end{equation}
by Theorem~\ref{main_result_2}. We compute
\begin{align}
S_k(0,z) &= \sum_p \log \biggl( 1 + \frac{(p-1)(z-1)}{p^{k+1}} \biggr) \notag \\
S_k(1,z) &= \frac d{dz} \sum_p \log \biggl( 1 + \frac{(p-1)(z-1)}{p^{k+1}} \biggr) = \sum_p \biggl( z+\frac{p^{k+1}}{p-1}-1 \biggr)^{-1} \label{Sks} \\
S_k(n,z) &= \frac{d^{n-1}}{dz^{n-1}} \sum_p \biggl( z+\frac{p^{k+1}}{p-1}-1 \biggr)^{-1} = \sum_p (-1)^{n-1} (n-1)! \biggl( z+\frac{p^{k+1}}{p-1}-1 \biggr)^{-n}, \notag
\end{align}
so that
\begin{equation} \label{Skn formula}
\tilde S_k(n) =(-1)^{n-1} \sum_p \biggl( \frac{p^{k+1}}{p-1}-1 \biggr)^{-n}.
\end{equation}
Therefore equation~\eqref{1} becomes
\[
e_{k,m} = \frac1m \sum_{j=0}^{m-1} e_{k,j} \tilde S_k(m-j),
\]
and in particular we have
\begin{align*}
e_{k,0} &=\prod_{p} \biggl(1-\frac{p-1}{p^{k+1}} \biggr),  \\
e_{k,1} &= e_{k,0} \tilde S_k(1), \\
e_{k,2} &= \frac{e_{k,0}}{2} \bigl( \tilde S_k(1)^2 + \tilde S_k(2) \bigr), \\
e_{k,3} &= \frac{e_{k,0}}{6} \bigl( {\tilde S_k(1)^3} + 3 \tilde S_k(1) \tilde S_k2) + 2 \tilde S_k(3) \bigr).
\end{align*}

\begin{remark}
We coded these formulas, and the one in equation~\eqref{Skn formula}, into SageMath and calculated approximations to them where we truncated the infinite product and sums to run over primes~$p\leqslant10^7$, resulting in the densities appearing in Table~\ref{ekm table} (the cases $k=2,3,4,5$ and $m=0,1,2,3$). While we do not include a formal analysis of the error arising from these truncations, we have listed the densities to nine decimal places to display our confidence in that level of precision.
\end{remark}

Finally, we extract the expectations and variances of various limiting distributions from their generating functions by relating those quantities to derivatives of their generating functions.

\begin{proof}[Proof of Corollary~\ref{corollary_X}]
Let $X_k$ be the discrete random variable whose distribution is the same as the limiting distribution of $\omega_k(n)$; then the generating function of this distribution is the function $P_k(z)$ in equation~\eqref{genfn}.
Proposition~\ref{recursive general} and equations~\eqref{genfn}--\eqref{Sks} tell us that
\begin{align*}
P_k'(1) &= P_k(1) S_k(1,1) =1\cdot \sum_p \frac{p-1}{p^{k+1}} = \sum_p \frac{p-1}{p^{k+1}} \\
P_k''(1) &= P_k'(1) S_k(1,1) + P_k(1) S_k(2,1) \\
&= \sum_p \frac{p-1}{p^{k+1}} \cdot \sum_p \frac{p-1}{p^{k+1}} + 1\biggl( - \sum_p \biggl( \frac{p-1}{p^{k+1}} \biggr)^2 \biggr) = \biggl( \sum_p \frac{p-1}{p^{k+1}} \biggr)^2 - \sum_p \biggl( \frac{p-1}{p^{k+1}} \biggr)^2
\end{align*}
But now by standard results from probability~\cite[Chapter~XI, Theorems~2--3]{Feller},
\begin{align*}
\bE[X_k] &= P_k'(1) = \sum_p \frac{p-1}{p^{k+1}} \\
\sigma^2[X_k] &= P_k''(1)+P_k'(1)-P_k'(1)^2 = \sum_p \frac{p-1}{p^{k+1}} - \sum_p \biggl( \frac{p-1}{p^{k+1}} \biggr)^2
\end{align*}
which is equivalent to the statement of the corollary.
\end{proof}

\begin{remark}
As before, we used SageMath to calculate truncations of these infinite sums, running over primes~$p\leqslant10^7$, to generate the approximate expectations and variances listed in Table~\ref{E and var table} for $k=2,3,4,5$.
\end{remark}

\begin{proof}[Proof of Corollary~\ref{corollary_Y}]
Let $X_A$ be the random variable whose distribution is the same as the limiting distribution of $\omega_A(n)$.
Using the same approach starting from the generating function~\eqref{main_result_4}, we see that
\begin{align*}
\bE & [X_A] = P_A'(1) = \frac d{dz} \prod_{p} \biggl(1-\frac{1}{p^2}+\sum_{j=2}^{\infty}z^{a_j} \biggl(\frac{1}{p^{j}}-\frac{1}{p^{j+1}} \biggr)\biggr) \bigg|_{z=1} \\
&= \biggl\{ \prod_{p} \biggl(1-\frac{1}{p^2}+\sum_{j=2}^{\infty}z^{a_j} \biggl(\frac{1}{p^{j}}-\frac{1}{p^{j+1}} \biggr)\biggr) \sum_p \frac d{dz} \log \biggl(1-\frac{1}{p^2}+\sum_{j=2}^{\infty}z^{a_j} \biggl(\frac{1}{p^{j}}-\frac{1}{p^{j+1}} \biggr)\biggr) \biggr\} \bigg|_{z=1} \\
&= 1 \sum_p \biggl(0+\sum_{j=2}^{\infty} a_jz^{a_j-1} \biggl(\frac{1}{p^{j}}-\frac{1}{p^{j+1}} \biggr) \biggr) \bigg/ \biggl(1-\frac{1}{p^2}+\sum_{j=2}^{\infty}z^{a_j} \biggl(\frac{1}{p^{j}}-\frac{1}{p^{j+1}} \biggr) \biggr) \bigg|_{z=1} \\
&= \sum_p \sum_{j=2}^{\infty} a_j \biggl(\frac{1}{p^{j}}-\frac{1}{p^{j+1}} \biggr) \bigg/ 1
\end{align*}
as claimed.
\end{proof}

\section*{Acknowledgments}

We thank Marcus Lai and Paul P\'eringuey for helpful discussions related to the calculations in Section~\ref{section_calculations}, and we are grateful to the Pacific Institute for the Mathematical Sciences (PIMS) whose welcoming environment contributed to this project.
The first author was supported by a University of Lethbridge postdoctoral fellowship.
The second author was supported in part by a Natural Sciences and Engineering Council of Canada Discovery Grant.


\begin{thebibliography}{9}

\bibitem{Boas}
R.~P.~Boas, Jr., {\em Entire functions}, Academic Press Inc.~Publishers, New York, 1954.





\bibitem{Elma_Liu}
E.~Elma and Y-R.~Liu,
{\em Number of prime factors with a given multiplicity}, Canad.~Math.~Bull.~{\bf65} (2022), no.~1, 253--269.

\bibitem{Feller}
W.~Feller,
{\em An introduction to probability theory and its applications.~Vol.~I}, third edition, John Wiley {\&} Sons, Inc., New York--London--Sydney, 1968.

\bibitem{Golomb}
S.~W.~Golomb,
{\em Powerful numbers},
Amer.~Math.~Monthly {\bf 77} (1970), 848--855.

\bibitem{Hardy_Ramanujan}
G.~H.~Hardy and S.~Ramanujan,
{\em The normal number of prime factors of a number $n$}, Quart.~J.~Math.~48 (1917), 76--92.
In {\em Collected papers of Srinivasa Ramanujan}, AMS Chelsea Publishing,
Providence, RI, 2000, pp.~562--575.

\bibitem{Hardy_Wright}
G.~H.~Hardy and E.~M.~Wright,
{\em An introduction to the theory of numbers},
sixth edition, revised by D.~R.~Heath-Brown and J.~H.~Silverman,
Oxford University Press, Oxford, 2008.

\bibitem{MV}
H.~L.~Montgomery and R.~C.~Vaughan, {\em Multiplicative number theory I, Classical theory},
Cambridge Studies in Advanced Mathematics {\bf97},
Cambridge University Press, Cambridge, 2007.

\bibitem{Renyi}
A.~Rényi,
{\em On the density of certain sequences of integers},
Acad.~Serbe Sci.~Publ.~Inst.~Math.~{\bf8} (1955), 157--162.




\end{thebibliography}
\end{document}